\newcolumntype{C}{>{\centering\arraybackslash}X}
\newtheorem{thm}{Theorem}[section]
\newtheorem{exa}[thm]{Example}
\newtheorem{pro}[thm]{Proposition}
\newtheorem{defn}[thm]{Definition}
\newtheorem{cor}[thm]{Corollary}
\newtheorem{rem}[thm]{Remark}
\def\ps@pprintTitle{%
	\let\@oddhead\@empty
	\let\@evenhead\@empty
	\let\@oddfoot\@empty
	\let\@evenfoot\@oddfoot
}
\begin{document}
\begin{frontmatter}
\title{Parity alternating permutations starting with an odd integer}
\author[label1]{Frether Getachew Kebede\footnote{Corresponding author.}}
 \address[label1]{Department of Mathematics, College of Natural and Computational Sciences, Addis Ababa University, P.O.Box 1176, Addis Ababa, Ethiopia; e-mail: frigetach@gmail.com}
 \author[label2]{Fanja Rakotondrajao}
 \address[label2]{D\'epartement de  Math\'ematiques et Informatique, BP 907 Universit\'e d'Antananarivo, 101 Antananarivo, Madagascar; e-mail: frakoton@yahoo.fr}
\begin{abstract}
A Parity Alternating Permutation of the set $[n] = \{1, 2,\ldots, n\}$ is a permutation with even and odd entries alternatively. We deal with parity alternating permutations having an odd entry in the first position, PAPs. We  study the numbers that count the PAPs with even as well as odd parity. We also study a subclass of PAPs being derangements as well, Parity Alternating Derangements (PADs). Moreover, by considering the parity of these PADs we look into their statistical property of excedance. 
\end{abstract}

\begin{keyword}
parity \sep parity alternating permutation \sep parity alternating derangement \sep excedance\\
\MSC[2020] 05A05 \sep 05A15 \sep 05A19
\end{keyword}
\end{frontmatter}

\section{Introduction and preliminaries}
A permutation $\pi$ is a bijection from the set $[n] = \{1, 2,\ldots, n\}$ to itself and we will write it in standard representation as $\pi=\pi(1)\,\pi(2)\,\cdots\,\pi(n)$, or as the product of disjoint cycles. The parity of a permutation $\pi$ is defined as 
the parity of the number of transpositions (cycles of length two) in any representation of $\pi$ as a product of transpositions. One way of determining the parity of $\pi$ is by obtaining the sign of $(-1)^{n-c}$, where $c$ is the number of cycles in the cycle representation of $\pi$. That is, if the sign of  $\pi$ is -1, then $\pi$ is called an odd permutation, and an even permutation otherwise. 
For example, the permutation  $4\,2\,1\,7\,8\,6\,3\,5=(1\,\,4\,\,7\,\,3)(2)(5\,\, 8)(6)$, of length 8, is even since it has sign 1. 
All basic definitions and properties not explained here can be found in for example \cite{stanley2011enumerative} and \cite{bona2012combinatorics}.

According to \cite{tanimoto2010combinatorics}, a Parity Alternating Permutation over the set $[n]$ is a permutation, in standard form, with even and odd entries alternatively (in this general sense). The set $\mathcal{P}_n$ of all parity alternating permutations is a subgroup of the symmetric group $S_n$, the group of all permutations over $[n]$. The order of the set set $\mathcal{P}_n$ has been studied lately in relations to other number sequences such as Eulerian numbers (see \cite{tanimoto2010parity, tanimoto2010combinatorics}). 

However, in this paper we will deal only with the parity alternating permutations which in addition have an odd entry in the first position; and we call them PAPs. It can be shown that the set $P_n$ containing all PAPs over $[n]$ is a subgroup of the symmetric group $S_n$ and also of the group $\mathcal{P}_n$. We consider this kind of permutations because, for  odd $n$ there are no parity alternating permutations over $[n]$ beginning with an even integer. Avi Peretz determined the number sequence that count the number of PAPs (see \href{https://oeis.org/A010551}{A010551}). Unfortunately, we could not find any details of his work. In \href{https://oeis.org/A010551}{A010551}, we can also find the exponential generating function of these numbers due to Paul D. Hanna. Since there is no published proof of this formula we prove it here, as Theorem \ref{thmpigen}. Moreover, the numbers that count the PAPs with even parity and with odd parity (which were not studied before) are determined.

By $p_n$ we denote the cardinality of the set $P_n$ of all PAPs over $[n]$. Let $\phi_n$ denote a map from $P_n$ to $S_{\lceil\frac{n}{2}\rceil}\times S_{\lfloor\frac{n}{2}\rfloor}$ that relates a PAP $\sigma$ to a pair of permutations $(\sigma_1, \sigma_2)$ in the set $S_{\lceil\frac{n}{2}\rceil}\times S_{\lfloor\frac{n}{2}\rfloor}$, in such a way that $\sigma_1(i)=\frac{\sigma(2i-1)+1}{2}$ and $\sigma_2(i)=\frac{\sigma(2i)}{2}$. It is easy to see that this map is a bijection. 
For example, the PAPs $5\,2\,1\,4\,3\,6\,7$ and $7\,4\,5\,6\,3\,2\,1$ over [7] are mapped to the pairs $(3\,1\,2\,4,\,1\,2\,3)$ and $(4\,3\,2\,1,\, 2\,3\,1)$, respectively. If we consider a PAP $\sigma$ in cycle representations, then each cycle consists of integers of the same parity. Thus, we immediately get cycle representation of $\sigma_1$ and $\sigma_2$. For instance, the cycle form of the two PAPs above are $(1\,5\,3)(7)(2)(4)(6)$ and $(1\,7)(3\,5)(2\,4\,6)$ which correspond to the pairs $\left((1\,3\,2)(4),\,(1)(2)(3)\right)$ and 
$((1\,4)(2\,3),\,(1\,2\,3))$, respectively. (Unless stated otherwise we will always use (disjoint) cycle representation of permutations.) Another way of looking at the mapping $\phi_n$ is that $\sigma_1$ and $\sigma_2$ correspond to the parts that contain the odd and even integers in $\sigma$, respectively. Therefore, studying PAPs is similar to studying the two permutations that correspond to the even and the odd integers in the PAP separately and then combining the properties. In Table \ref{tab:t1}, we give a short summary of properties that permutations and PAPs satisfy (for detailed discussions, see Section \ref{section2}).
\begin{table}[H]
\centering
\begin{tabular}{ccc}
       & Permutations & PAPs\\
   \toprule
    Seq & $1, 1, 2, 6, 24, 120,\ldots.$ (\href{https://oeis.org/A000142}{A000142}) & $1, 1, 1, 2, 4, 12,\ldots.$ (\href{https://oeis.org/A010551}{A010551})\\[.2cm]
    EGF & $\frac{1}{1-x}$ & $\frac{2\sqrt{4-x^2}+2\cos^{-1}\left( 1-x^2/2\right)}{(2-x)\sqrt{4-x^2}}$\\[.2cm]
     Even (seq) & $1, 1, 1, 3, 12, 60, \ldots.$ (\href{https://oeis.org/A001710}{A001710}) & $1,1,1,1,2,6,18,72,\ldots$\\
    Odd (seq)& $1, 1, 1, 3, 12, 60, \ldots.$ (\href{https://oeis.org/A001710}{A001710}) & $0,0,0,1,2,6,18,72,\ldots$\\[.2cm]
    Even (EGF) & $\frac{2-x^2}{2-2x}$ & $\frac{\sqrt{4-x^2}+\cos^{-1}\left( 1-\frac{x^2}{2}\right)}{(2-x)\sqrt{4-x^2}}+\frac{x^2}{4}+\frac{x}{2}+\frac{1}{2}$\\[.3cm]
    Odd (EGF) & $\frac{x^2}{2-2x}$ & $\frac{\sqrt{4-x^2}+\cos^{-1}\left( 1-\frac{x^2}{2}\right)}{(2-x)\sqrt{4-x^2}}-\frac{x^2}{4}-\frac{x}{2}-\frac{1}{2}$\\
   \bottomrule
   \end{tabular}
   \caption{A comparison table of permutations and PAPs (EGF mean exponential generating function).}
   \label{tab:t1}
\end{table}
One interesting subset of $S_n$ is the set $D_n$ of derangements. For $d_n=|D_n|$, we have a well known relation 
\begin{align}\label{recd1}
    d_n=(n-1)[d_{n-1}+d_{n-2}],\,\,d_0=1 \text{ and } d_1=0
\end{align}
for $n\geq 2$. A proof of this relation may be found in any textbook on combinatorics, but we will have later use of the following bijection due to Mantaci and Rakotondrajao (\cite{mantaci2003exceedingly}). They define $\psi_n$ to be the bijection between $D_n$ and $[n-1]\times (D_{n-1}\cup D_{n-2})$ as follows:
let $D_n^{(1)}$ denote the set of derangements over $[n]$ having the integer $n$ in a cycle of length greater than 2, and  $D_n^{(2)}$ be the set of derangements over $[n]$ having $n$ in a transposition. These two sets are disjoint and their union is $D_n$. Then for $\delta\in D_n$ define $\psi_n(\delta)=(i,\delta')$, where $i=\delta^{-1}(n)$ and $\delta'$ is the derangement obtained from
\begin{enumerate}
    \item[$\bullet$] $\delta\in D_n^{(1)}$ by removing $n$ or
    \item[$\bullet$] $\delta\in D_n^{(2)}$ by removing the transposition $(i\,\,n)$ and then decreasing all integers greater than $i$ by 1.
\end{enumerate}
For instance, the pairs $(2,(1\,5\,2)(3\,4))$ and $(2,(1\,2)(3\,4))$ correspond to the derangements $(1\,5\,2\,6)(3\,4)$ and $(1\,3)(4\,5)(2\,6)$, respectively, for $n=6$. We denote the restricted bijections $\psi_n|_{D_n^{(1)}}$ and $\psi_n|_{D_n^{(2)}}$ by $\psi_n^{(1)}$ and $\psi_n^{(2)}$, respectively.

Another important, and more difficult to prove, recurrence relation that the numbers $d_n$ satisfy is 
\begin{align}\label{recd2}
    d_n=n\,d_{n-1}+(-1)^n,\,\,d_0=0
\end{align}
for $n\geq 1$. We will later make a use of the bijection $\tau_n: ([n]\times D_{n-1})\backslash F_n \longrightarrow D_n\backslash E_n$ given by the second author (\cite{rakotondrajao2007k}) proving the recurrence. Where $E_n$ is the set containing the derangement $\Delta_n=(1\,2) (3\,4)\cdots (n-1\,\,\,n)$ for even  $n$, and is empty for odd $n$. $F_n$ is the set containing the pair $(n,\,\Delta_{n-1})$ when $n$ is odd, and is empty when $n$ is even. Thus, the inverse $\zeta_n$ of $\tau_n$ relates an element of $[n-1]\times D_{n-1}$ with every derangement over $[n]$ that has the integer $n$ in a cycle of length greater than 2, and an element of $\{n\}\times D_{n-1}\backslash F_n$ with every derangement over $[n]$ in which $n$ lies in a transposition.

Classifying derangements by their parity, we denote the number of even and odd derangements  over $[n]$ by $d^e_n$ and $d^o_n$, respectively. Clearly $d_n=d^e_n+d^o_n$. Moreover, the numbers $d^e_n$ and $d^o_n$ satisfy the relations
\begin{align}\label{derpar}
    d^e_n=(n-1)[d^o_{n-1}+d^o_{n-2}]\quad \text{ and }\quad d^o_n=(n-1)[d^e_{n-1}+d^e_{n-2}],
\end{align}
for $n\geq 2$ with initial conditions $d^e_0=1$, $d^e_1=0$, $d^o_0=0$, and $d^o_1=0$ (\cite{mantaci2003exceedingly}, Proposition 4.1).

We will put a major interest on Parity Alternating Derangements (PADs) which are the derangements which also are parity alternating permutations starting with odd integers. Let $\mathfrak{d}_n$ denote cardinality of the set of PADs $\mathfrak{D}_n=D_n\cap P_n$. The restricted bijection $\Phi_n=\phi_n|_{\mathfrak{D}_n}:\mathfrak{D}_n\longrightarrow D_{\lceil\frac{n}{2}\rceil}\times D_{\lfloor\frac{n}{2}\rfloor}$ will let us consider the odd parts and the even parts of any given PAD regarded as ordinary derangements with smaller length than the length of the PAD. The mapping $\Phi_n$ plays the central role in our investigations. In Table \ref{tab:t2}, we display the connection of ordinary derangements and PADs (for detailed discussions, see Section \ref{section3}). Finding explicit expressions for some of the generating functions are still open questions. On the other hand the EGF for the PADs for example is the solution to an eighth order differential equation with polynomial coefficients, and also is expressible in terms of Hadamard products of some known generating functions.
\begin{table}[H]
\centering
\begin{tabular}{ccc}
   & Derangements & PADs\\
   \toprule
    Seq & $1, 0, 1, 2, 9, 44,\ldots$ (\href{https://oeis.org/A000166}{A000166}) & $1, 0, 0, 0, 1, 2, 4, 18, 81, 396, \ldots$ \\[.2cm]
    EGF & $\frac{e^{-x}}{1-x}$ & open\\[.2cm]
    RR & $d_n=(n-1)[d_{n-1}+d_{n-2}]$ & relation (\ref{recpad1}) \\
    RR & $d_n=n d_{n-1}+(-1)^n$ & relation (\ref{recpad2})\\
     Even (seq)& $1, 0, 0, 2, 3, 24, 130, \ldots$ (\href{https://oeis.org/A003221}{A003221}) & $1,0,0,0,1,0,4,6,45,192, 976\ldots$\\
    Odd (seq)& $0, 0, 1, 0, 6, 20, 135, \ldots$ (\href{https://oeis.org/A000387}{A000387}) & $0, 0, 0, 0, 0, 2, 0, 12, 36, 204, 960,\ldots$\\[.2cm]
    Even (EGF) & $\frac{(2-x^2) e^{-x}}{2(1-x)}$  & open\\[.2cm]
    Odd (EGF) & $\frac{x^2 e^{-x}}{2(1-x)}$ & open\\[.2cm]
     Even (RR) & $d^e_n=(n-1)[d^o_{n-1}+d^o_{n-2}]$ & relation (\ref{padpar1})\\
    Odd (RR) & $d^o_n=(n-1)[d^e_{n-1}+d^e_{n-2}]$ & relation (\ref{padpar2})\\[.2cm]
    Even\,-\,Odd & $(-1)^{n-1}(n-1)$ & $(-1)^{n-2}\Big\lceil \frac{n-2}{2}\Big\rceil \Big\lfloor \frac{n-2}{2}\Big\rfloor$\\
   \bottomrule
   \end{tabular}
   \caption{A comparison table of derangements and PAPs, RR represents recurrence relation.}
   \label{tab:t2}
\end{table}

In section \ref{section4}, we study excedance distribution over PADs by means of the corresponding distributions for the two derangements obtained by $\Phi_n$.

\section{Parity Alternating Permutations (PAPs)}\label{section2}
 As we stated in the introduction, we use splitting method by the mapping $\phi_n$ in the study of PAPs. One application of this is that the number of PAPs of length $n$ is
 \begin{align*}
 p_{n}=|S_{\lceil\frac{n}{2}\rceil}||S_{\lfloor\frac{n}{2}\rfloor}|=\lceil n/2\rceil !\lfloor n/2\rfloor !.
 \end{align*}
 
\begin{table}[ht!]
\centering
\begin{tabular}{c|ccccccccccc}
$n$ & 0 & 1 & 2 & 3 & 4 & 5 & 6 & 7 & 8 & 9 & 10\\ 
\hline
$p_n$ & 1 & 1 & 1 & 2 & 4 & 12 & 36 & 144 & 576 & 2880 & 14400\\ 
\end{tabular}
\caption{First few terms of the sequence $\{p_n\}_0^{\infty}$.}
\label{table_PAP}
\end{table}

  \begin{pro}\label{cor1}
 The numbers $p_n$ satisfy the recurrence relation
\begin{align*}\label{pi-n}
    p_{n}=\lceil n/2\rceil p_{n-1},
\end{align*}
for $n\geq 1$ and $p_0=1$.
\end{pro}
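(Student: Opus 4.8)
The plan is to read the recurrence off the closed form $p_n=\lceil n/2\rceil!\,\lfloor n/2\rfloor!$ already obtained from the bijection $\phi_n$ (so in particular $P_n\cong S_{\lceil n/2\rceil}\times S_{\lfloor n/2\rfloor}$). The base case $p_0=1$ is immediate, since both factorials are $0!=1$ (equivalently, the empty permutation is the unique PAP of $[0]$). For $n\ge 1$ the idea is simply to compare $\lceil n/2\rceil!\,\lfloor n/2\rfloor!$ with $\lceil(n-1)/2\rceil!\,\lfloor(n-1)/2\rfloor!$, which forces a split into two cases according to the parity of $n$, because exactly one of the ceiling/floor values changes when $n$ is decreased by $1$.

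In the case $n=2m$ even, one has $\lceil n/2\rceil=\lfloor n/2\rfloor=m$ while $\lceil(n-1)/2\rceil=m$ and $\lfloor(n-1)/2\rfloor=m-1$; hence $p_{n-1}=m!\,(m-1)!$ and $\lceil n/2\rceil\,p_{n-1}=m\cdot m!\,(m-1)!=m!\,m!=p_n$. In the case $n=2m+1$ odd, one has $\lceil n/2\rceil=m+1$, $\lfloor n/2\rfloor=m$, while $\lceil(n-1)/2\rceil=\lfloor(n-1)/2\rfloor=m$; hence $p_{n-1}=m!\,m!$ and $\lceil n/2\rceil\,p_{n-1}=(m+1)\cdot m!\,m!=(m+1)!\,m!=p_n$. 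Either way the claimed identity $p_n=\lceil n/2\rceil\,p_{n-1}$ holds, which completes the argument.

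There is no genuine obstacle here; the only thing to watch is the bookkeeping of how $\lceil n/2\rceil$ and $\lfloor n/2\rfloor$ behave as $n$ alternates in parity. If a more structural proof is preferred, I would instead exhibit a bijection $P_n\to[\lceil n/2\rceil]\times P_{n-1}$: transported through $\phi_n$, this reduces to the standard factorization $S_{m+1}\cong[m+1]\times S_m$ applied to the odd part (when $n=2m+1$ is odd) or the analogous factorization applied to the even part (when $n=2m$ is even, giving the factor $m=\lceil n/2\rceil$). However, the closed-form route above is shorter and self-contained, so I would present that one.
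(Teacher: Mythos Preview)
Your proof is correct. You verify the recurrence algebraically from the closed form $p_n=\lceil n/2\rceil!\,\lfloor n/2\rfloor!$, which the paper has already established via the bijection $\phi_n$; the parity case split is handled cleanly and the arithmetic is right.

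The paper takes a different, bijective route: it builds an explicit map $\Omega:P_n\to\bigl[\lceil n/2\rceil\bigr]\times P_{n-1}$ by first applying $\phi_n$ to split $\sigma$ into $(\sigma_1,\sigma_2)$, then applying the standard ``remove $n$ and record its preimage'' bijection $\omega_m:S_m\to[m]\times S_{m-1}$ to $\sigma_2$ (when $n$ is even) or to $\sigma_1$ (when $n$ is odd), and reassembling via $\phi^{-1}$. This is exactly the structural argument you sketch in your final paragraph as an alternative. Your algebraic route is shorter and entirely self-contained here; the paper's bijective construction, however, pays off downstream, since the same $\Omega$ (refined by tracking whether the removed element was a fixed point) is reused to prove the parity-split recurrences $p_n^e=\lfloor(n-1)/2\rfloor\,p_{n-1}^o+p_{n-1}^e$ and its companion in the next theorem.
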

\begin{proof}
First let us define a mapping $\omega_n:S_n\longrightarrow [n]\times S_{n-1}$ by 
\begin{align*}
    \omega_n(\pi)=(i,\pi'),
\end{align*}
where $\pi'$ is obtained from $\pi\in S_n$ by removing the integer $n$, and $i=\pi^{-1}(n)$. One can easily see that $\omega_n$ is a bijection.
 
Now let us take a PAP $\sigma$ over $[n]$. Then $\phi_n$ maps $\sigma$ to a pair $(\sigma_1, \sigma_2)$. Define then a mapping $\Omega:P_n\longrightarrow \Bigl[ \big\lceil \frac{n}{2} \big\rceil \Bigr]\times P_{n-1}$ as follows: for $n=2m$
\begin{align*}
 \Omega(\sigma)=\left(i,\phi_{2m}^{-1}(\sigma_1,\, \sigma_2')\right),   
\end{align*}
where $(i,\sigma_2')=\omega_m(\sigma_2)$, and for $n=2m+1$
\begin{align*}
  \Omega(\sigma)=\left(i,\phi_{2m+1}^{-1}(\sigma_1',\, \sigma_2)\right),   
\end{align*}
where $(i,\sigma_1')=\omega_{m+1}(\sigma_1)$. The mapping $\Omega$ is a bijection since $\omega_n$ is a bijection for every $n\geq 1$. In any case, there are $\big\lceil \frac{n}{2} \big\rceil$ possibilities for $i$.
\end{proof}
As a consequence, we get the following theorem. 
\begin{thm}\label{thmpigen}
The exponential generating function $P(x)=\sum_{n\geq 0}p_n \frac{x^n}{n!}$ of the sequence $\{p_n\}_{n=0}^{\infty}$ has the closed formula
\[
P(x)=\displaystyle\frac{2}{2-x}+\displaystyle\frac{\cos^{-1}( 1-\displaystyle\frac{x^2}{2})}{(2-x)\sqrt{1-\displaystyle\frac{x^2}{4}}}.
\]
\end{thm}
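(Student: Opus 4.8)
The plan is to split $P(x)$ into its even and odd parts, recognise the coefficients as reciprocals of (near-)central binomial coefficients, and reduce the identity to the classical power series of $\sin^{-1}$.

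First I would use the product formula $p_n=\lceil n/2\rceil!\,\lfloor n/2\rfloor!$ established above (it also follows by iterating Proposition~\ref{cor1}), so that $p_{2m}=(m!)^2$ and $p_{2m+1}=(m+1)!\,m!$, giving
\[
P(x)=\sum_{m\ge0}\frac{(m!)^2}{(2m)!}\,x^{2m}+\sum_{m\ge0}\frac{(m+1)!\,m!}{(2m+1)!}\,x^{2m+1}
=\sum_{m\ge0}\frac{x^{2m}}{\binom{2m}{m}}+\sum_{m\ge0}\frac{(m+1)\,x^{2m+1}}{(2m+1)\binom{2m}{m}}
=:A(x)+B(x).
\]

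Next I would bring in the two standard Maclaurin expansions
\[
\frac{\sin^{-1}y}{\sqrt{1-y^2}}=\sum_{m\ge0}\frac{4^m y^{2m+1}}{(2m+1)\binom{2m}{m}},
\qquad
\frac{1}{1-y^2}+\frac{y\,\sin^{-1}y}{(1-y^2)^{3/2}}=\sum_{m\ge0}\frac{4^m y^{2m}}{\binom{2m}{m}},
\]
the second being the termwise derivative of the first, and the first following by differentiating $\tfrac12(\sin^{-1}y)^2=\sum_{m\ge1}\frac{4^{m-1}y^{2m}}{m^2\binom{2m}{m}}$. Substituting $y=x/2$ yields the closed forms
\[
A(x)=\frac{1}{1-\frac{x^2}{4}}+\frac{x\,\sin^{-1}(x/2)}{2\,(1-\frac{x^2}{4})^{3/2}},
\qquad
\sum_{m\ge0}\frac{x^{2m+1}}{(2m+1)\binom{2m}{m}}=\frac{2\,\sin^{-1}(x/2)}{\sqrt{1-\frac{x^2}{4}}}.
\]
Writing $\frac{m+1}{2m+1}=\frac12+\frac{1}{2(2m+1)}$ then gives $B(x)=\frac{x}{2}A(x)+\frac{\sin^{-1}(x/2)}{\sqrt{1-x^2/4}}$, so that $P(x)=A(x)+B(x)=\frac{2+x}{2}\,A(x)+\frac{\sin^{-1}(x/2)}{\sqrt{1-x^2/4}}$.

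Finally I would simplify, using $1-\frac{x^2}{4}=\frac{(2-x)(2+x)}{4}$. The rational contribution is $\frac{2+x}{2}\cdot\frac{1}{1-x^2/4}=\frac{2}{2-x}$, and the two $\sin^{-1}$ terms, placed over the common denominator $4(1-x^2/4)^{3/2}$ with numerator $(2+x)x+4(1-\frac{x^2}{4})=2(x+2)$, combine to $\frac{2\,\sin^{-1}(x/2)}{(2-x)\sqrt{1-x^2/4}}$. The double-angle identity $\cos 2\theta=1-2\sin^2\theta$ with $\theta=\sin^{-1}(x/2)$ gives $\cos^{-1}(1-\tfrac{x^2}{2})=2\sin^{-1}(x/2)$ as analytic functions near the origin, which turns the last display into the stated formula. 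The only genuinely non-routine ingredient is having the $\sin^{-1}$ series at hand (and recognising that the answer must be phrased through it); everything else is the bookkeeping just described, and matching the first Taylor coefficients of the closed form against Table~\ref{table_PAP} is a convenient sanity check.
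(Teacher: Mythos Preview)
Your proof is correct. Both you and the paper split $P(x)$ into its even and odd parts (your $A$ and $B$ are exactly the paper's $P_0$ and $P_1$), but the methods of evaluating them differ. The paper translates the recurrence $p_n=\lceil n/2\rceil\,p_{n-1}$ into the pair of functional relations $P_0=\tfrac{x}{2}P_1+1$ and $P_1=\tfrac{x}{2}P_0+\tfrac12\int_0^x P_0$, derives a first-order linear ODE for $P_0$, and solves it. You instead plug in the closed form $p_n=\lceil n/2\rceil!\,\lfloor n/2\rfloor!$ directly, recognise the coefficients as reciprocal central binomial coefficients, and invoke the classical $\arcsin$ series and its derivative. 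Your route is more explicit and avoids solving an ODE, at the cost of needing the $\arcsin$ identities on hand; the paper's route is more mechanical once the recurrence is known. Both land on the same $P_0$ and $P_1$, and your final simplification (including the $2\sin^{-1}(x/2)=\cos^{-1}(1-x^2/2)$ step) is clean and correct.
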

\begin{proof}
Based on the recurrence relation in Proposition \ref{cor1}, we obtain the following relations
\begin{align*}
    &P_0(x)=\frac{x}{2}P_1(x)+1\quad \text{ and } \quad P_1(x)=\frac{x}{2}P_0(x)+\frac{1}{2}\int_{0}^{x}P_0(t)\,dt,
\end{align*}
where $P_0(x)=\sum_{n\geq 0}p_{2n}\frac{x^{2n}}{(2n)!}$ and $P_2(x)=\sum_{n\geq 0}p_{2n+1}\frac{x^{2n+1}}{(2n+1)!}$. Clearly, $P(x)=P_0(x)+P_1(x)$. Additionally, $P_0(x)$ satisfies the differential equation 
\begin{align*}
    \left(1-\frac{x^2}{4}\right)P'_0(x)=\frac{x^2+2}{2x}P_0(x)-\frac{1}{x}.
\end{align*}
Thus, we obtain the formulas
\begin{align*}
    P_0(x)=\frac{4}{4-x^2}+\frac{4x\sin^{-1}\left(\frac{x}{2}\right)}{(4-x^2)^{3/2}} \quad\text{ and }\quad P_1(x)=\frac{8}{4x-x^3}+\frac{8x\sin^{-1}\left(\frac{x}{2}\right)}{x(4-x^2)^{3/2}}-\frac{2}{x}.
\end{align*}
Therefore,
\[
P(x)=\displaystyle\frac{2}{2-x}+\displaystyle\frac{\cos^{-1}( 1-\displaystyle\frac{x^2}{2})}{(2-x)\sqrt{1-\displaystyle\frac{x^2}{4}}}.
\qedhere\]
\end{proof}
For classification of PAPs in terms of their parity, we use $P^e_n$ and $P^o_n$ to denote the set of even PAPs and odd PAPs, respectively, and $p_{n}^e$ and $p_{n}^o$ as their cardinality, respectively. Thus,
\begin{align*}
p_n=p^e_n+p^o_n.
\end{align*}
\begin{table}[ht!]
\centering
\begin{tabular}{c|ccccccccccc}
$n$ & 0 & 1 & 2 & 3 & 4 & 5 & 6 & 7 & 8 & 9 & 10\\ 
\hline 
$p_n^e$ & 1 & 1 & 1 & 1 & 2 & 6 & 18 & 72 & 288 & 1440 & 7200\\ 
$p_n^o$ & 0 & 0 & 0 & 1 & 2 & 6 & 18 & 72 & 288 & 1440 & 7200
\end{tabular}
\caption{First few terms of the sequences $\{p_n^e\}_0^{\infty}$ and $\{p_n^o\}_0^{\infty}$.}
\label{table_PAP_e_o}
\end{table}

Our goal is now to study the relationships between these two sequences.
\begin{thm}\label{prop1}
The numbers $p_n^e$ and $p_n^o$ satisfy the recurrence relations
\begin{align*}
&p_{n}^e=\lfloor (n-1)/2\rfloor p_{n-1}^o+p_{n-1}^e\\
&p_{n}^o=\lfloor (n-1)/2\rfloor p_{n-1}^e+p_{n-1}^o,
\end{align*}
for $n\geq 1$, with initial conditions $p^e_0=1$ and  $p^o_0=0$. 
\end{thm}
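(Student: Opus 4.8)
The plan is to upgrade the bijection $\Omega\colon P_n\to\bigl[\lceil n/2\rceil\bigr]\times P_{n-1}$ of Proposition~\ref{cor1} to a parity‑sensitive count. The first thing I would record is that $\phi_n$ is multiplicative on signs: since every cycle of a PAP $\sigma$ uses integers of a single parity, $\phi_n$ carries the disjoint‑cycle decomposition of $\sigma$ to the concatenation of those of $\sigma_1$ and $\sigma_2$, so if $\sigma,\sigma_1,\sigma_2$ have $c,c_1,c_2$ cycles respectively then $c=c_1+c_2$, while $\lceil n/2\rceil+\lfloor n/2\rfloor=n$, whence
\[
\operatorname{sgn}(\sigma)=(-1)^{\,n-c}=(-1)^{\,\lceil n/2\rceil-c_1}\,(-1)^{\,\lfloor n/2\rfloor-c_2}=\operatorname{sgn}(\sigma_1)\operatorname{sgn}(\sigma_2).
\]
In particular a PAP is even exactly when its odd part and its even part have the same parity.

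Next I would analyse the effect on the sign of deleting the largest letter, i.e.\ the bijection $\omega_m\colon S_m\to[m]\times S_{m-1}$ hidden inside $\Omega$. If $\omega_m(\tau)=(i,\tau')$ then $i=\tau^{-1}(m)$; when $i=m$ the integer $m$ is a fixed point of $\tau$, so $\tau'$ has one fewer cycle on one fewer point and $\operatorname{sgn}(\tau')=\operatorname{sgn}(\tau)$, whereas when $i\in[m-1]$ the letter $m$ lies in a cycle of length $\ge 2$ and is spliced out without changing the number of cycles, so $\operatorname{sgn}(\tau')=-\operatorname{sgn}(\tau)$. Thus among the $m$ admissible values of $i$, exactly one preserves the sign of $\tau$ and the remaining $m-1$ reverse it.

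Now I would combine the two observations through $\Omega$. Take $n=2m$, so $\Omega(\sigma)=(i,\rho)$ where $(i,\sigma_2')=\omega_m(\sigma_2)$ and $\rho\in P_{n-1}$ is the PAP whose odd part is $\sigma_1$ and whose even part is $\sigma_2'$; then $\operatorname{sgn}(\rho)=\operatorname{sgn}(\sigma_1)\operatorname{sgn}(\sigma_2')$, which equals $\operatorname{sgn}(\sigma)$ for the unique index $i=m$ and equals $-\operatorname{sgn}(\sigma)$ for the remaining $m-1=\lfloor(n-1)/2\rfloor$ indices. Hence $\sigma$ is even if and only if either $\rho$ is even and $i=m$, or $\rho$ is odd and $i\in[m-1]$, and summing these two cases gives $p_n^e=p_{n-1}^e+\lfloor(n-1)/2\rfloor\,p_{n-1}^o$; interchanging the roles of the even and odd PAPs of length $n-1$ yields the companion identity for $p_n^o$. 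For $n=2m+1$ the computation is the same with $\omega_{m+1}$ acting on $\sigma_1$ instead of $\omega_m$ on $\sigma_2$, the single sign‑preserving index now being $i=m+1$ out of $m+1$, i.e.\ $m=\lfloor(n-1)/2\rfloor$ sign‑reversing indices; and the base case $n=0$ is the stated $p_0^e=1,\ p_0^o=0$. The only delicate point is the sign bookkeeping above — the multiplicativity of $\operatorname{sgn}$ under $\phi_n$ together with its behaviour under deleting the largest letter; everything after that is just partitioning $\bigl[\lceil n/2\rceil\bigr]\times P_{n-1}$ according to whether $i$ is the top index.
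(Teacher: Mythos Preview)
Your proof is correct and follows essentially the same strategy as the paper: both arguments refine the bijection $\Omega$ of Proposition~\ref{cor1} by tracking how deleting the largest letter from the relevant half affects the sign, observing that exactly one of the $\lceil n/2\rceil$ indices (namely the top one) preserves parity while the remaining $\lfloor(n-1)/2\rfloor$ reverse it. Your explicit statement of the sign multiplicativity of $\phi_n$ is a clean way to organise the bookkeeping, but it is exactly what the paper encodes implicitly when it builds the parity-restricted maps $\Omega^e,\Omega^o$ from $\omega_m^e,\omega_m^o$.
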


\begin{proof}
Let $S_n^e$ and $S_n^{o}$ be the set of even and odd permutations, respectively. Define two mappings $\omega_n^{e}:S_n^e\longrightarrow [n-1]\times S_{n-1}^{o}\cup S_{n-1}^e$ and $\omega_n^{o}:S_n^o\longrightarrow [n-1]\times S_{n-1}^{e}\cup S_{n-1}^o$ by
\begin{align*}
    \omega_n^{e}(\pi)=\begin{cases}
                     (i,\pi'), \text{ if } i\neq n\\
                     \pi'', \text{ otherwise}
                     \end{cases}
                     \quad \text{ and } \quad 
    \omega_n^{o}(\pi)=\begin{cases}
                     (j,\pi'), \text{ if } j\neq n\\
                     \pi'', \text{ otherwise}
                     \end{cases},
\end{align*}
respectively, where $i=\pi^{-1}(n)$, $\pi'$ is obtained from $\pi$ by removing the integer $n$, and $\pi''$ is obtained from $\pi$ by removing the cycle $(n)$, for $\pi\in S_n^e$. Similarly for $\omega_n^o$. It is easy to see that both mappings $\omega_n^{e}$ and $\omega_n^{o}$ are bijections.

The mapping $\omega_n^{e}$ changes the parity of $\pi$ when it results in $\pi'$ and preserves when it results in $\pi''$. This is because the signs of $\pi$, $\pi'$ and $\pi''$ are $(-1)^{n-c}$, $(-1)^{n-1-c}$, and $(-1)^{n-1-c+1}$, respectively, where $c$ is the number of cycles in $\pi$. For the mapping $\omega_n^o$ we apply similar argument. 

Now consider a PAP $\sigma$ in $P_n$. Then $\phi_n$ maps $\sigma$ in to a pair $(\sigma_1, \sigma_2)$. Following the notation in the proof of Proposition \ref{cor1}, let us define two mappings $\Omega^e:P_n^e\longrightarrow\Bigl[\big\lfloor \frac{n-1}{2}\big\rfloor\Bigr]\times P_{n-1}^{o}\cup P_{n-1}^{e}$ and $\Omega^o:P_n^o\longrightarrow\Bigl[\big\lfloor \frac{n-1}{2}\big\rfloor\Bigr]\times P_{n-1}^{e}\cup P_{n-1}^{o}$ as follows:
\begin{enumerate}
    \item when $n$ is even
    \begin{align*}
    &\Omega^e(\sigma)=\begin{cases} \left(i,\,\phi_{n}^{-1}(\sigma_1, \sigma_2')\right), \text{ if } i\neq \frac{n}{2}\\
    \phi_{n}^{-1}(\sigma_1, \sigma_2''), \text{ otherwise}
    \end{cases}\text{and }\\
    &\Omega^o(\sigma)=\begin{cases} \left(i,\,\phi_{n}^{-1}(\sigma_1, \sigma_2')\right), \text{ if } i\neq \frac{n}{2}\\
    \phi_{n}^{-1}(\sigma_1, \sigma_2''), \text{ otherwise}\end{cases},
    \end{align*}
    where $i=\sigma_2^{-1}(\frac{n}{2})$, and both $\sigma'_2$,  $\sigma''_2$  are obtained from  $\sigma_2$ by the mapping $\omega_{\frac{n}{2}}^{e}$ when $\sigma\in P_n^e$ and by the mapping  $\omega_{\frac{n}{2}}^{o}$ when $\sigma\in P_n^o$,
     \item when $n$ is odd
\begin{align*}
    &\Omega^e(\sigma)=\begin{cases} \left(j,\,\phi_{n}^{-1}(\sigma_1', \sigma_2)\right), \text{ if } j\neq \frac{n+1}{2}\\
    \phi_{n}^{-1}(\sigma_1'', \sigma_2), \text{ otherwise} \end{cases}
\text{and }\\
    &\Omega^o(\sigma)=\begin{cases} \left(j,\,\phi_{n}^{-1}(\sigma_1', \sigma_2)\right), \text{ if } j\neq \frac{n+1}{2}\\
    \phi_{n}^{-1}(\sigma_1'', \sigma_2), \text{ otherwise}
    \end{cases}, 
    \end{align*}
    where $j=\sigma_1^{-1}(\frac{n+1}{2})$, and both $\sigma'_1$, $\sigma''_1$  are obtained from  $\sigma_1$ by the mapping $\omega_{\frac{n+1}{2}}^{e}$ when $\sigma\in P_n^e$ and by the mapping $\omega_{\frac{n+1}{2}}^{o}$ when $\sigma\in P_n^o$.
\end{enumerate} 
Since $\omega_n$ is bijection for $n\geq 2$, both $\Omega^e$ and $\Omega^o$ are bijections too. Note that in both mappings there are  $\big\lfloor \frac{n-1}{2} \big\rfloor$ possibilities for $i$ ($i\neq \frac{n}{2}$) and similarly for $j$ ($j\neq \frac{n+1}{2}$).
\end{proof}

\begin{pro}\label{propequal}
For any positive integer $n\geq 3$, we have
\begin{align*}
p_n^e=p_n^o.
\end{align*}
\end{pro}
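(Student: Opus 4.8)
The plan is to avoid a bijection and argue purely algebraically from the two recurrence relations of Theorem~\ref{prop1}. Put $a_n := p_n^e - p_n^o$. Subtracting the recurrence for $p_n^o$ from the one for $p_n^e$ gives, for $n \ge 1$,
\[
a_n = \left(1 - \left\lfloor \tfrac{n-1}{2}\right\rfloor\right) a_{n-1},
\]
a purely multiplicative recurrence for the difference sequence.

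Next I would note that at $n = 3$ the coefficient $1 - \lfloor 2/2\rfloor$ equals $0$, so $a_3 = 0$ no matter what $a_2$ is (in fact $a_2 = p_2^e - p_2^o = 1$). Since each $a_n$ is an integer multiple of $a_{n-1}$, a one-line induction on $n$ then yields $a_n = 0$ for every $n \ge 3$, which is exactly the claim $p_n^e = p_n^o$.

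There is essentially no obstacle once Theorem~\ref{prop1} is in hand; the only point needing a word of care is the base case, since the identity really does fail for $n \in \{0,1,2\}$ (there $p_n^o = 0 < 1 = p_n^e$), and this is consistent with the recurrence because the coefficient $1-\lfloor(n-1)/2\rfloor$ equals $1$ for $n=1,2$ and first vanishes at $n=3$. The substantive combinatorial work was already absorbed into the bijections $\Omega^e,\Omega^o$ of Theorem~\ref{prop1}, and the present proposition is just the symmetric ``$+p_{n-1}^e$'' versus ``$+p_{n-1}^o$'' feature of those recurrences made explicit. (If one insisted on a direct combinatorial proof for $n \ge 3$, one could instead look for a parity-reversing involution on $P_n$ that toggles whether the largest odd letter, or the largest even letter, is a fixed point — but the computation above is shorter and self-contained.)
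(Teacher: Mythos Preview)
Your argument is correct. Subtracting the two recurrences of Theorem~\ref{prop1} indeed gives
\[
a_n \;=\; \bigl(1-\lfloor (n-1)/2\rfloor\bigr)\,a_{n-1},
\]
and since the multiplier vanishes at $n=3$, the difference $a_n=p_n^e-p_n^o$ is forced to be zero for all $n\ge 3$ by the trivial induction you describe.

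Your route is genuinely different from the paper's. The paper does \emph{not} invoke Theorem~\ref{prop1} at all; it writes down an explicit parity-reversing involution on $P_n$, namely multiplication by the transposition $(1\,\,n)$ when $n$ is odd and by $(1\,\,n{-}1)$ when $n$ is even (equivalently, swapping the first and last odd entries in the one-line word). This is a one-line, self-contained bijection that never touches the recurrences. By contrast, your proof is purely algebraic but inherits all of its combinatorial content from the bijections $\Omega^e,\Omega^o$ already built in Theorem~\ref{prop1}; you are essentially reading off a symmetry of those recurrences. The paper's approach is logically independent of Theorem~\ref{prop1} and shorter in absolute terms; yours is a nice illustration that Proposition~\ref{propequal} is already latent in Theorem~\ref{prop1}, so that once the latter is established no further combinatorics is needed. (Incidentally, the parenthetical involution you sketch at the end---toggling whether the largest odd letter is fixed---is not quite the paper's bijection, which simply composes with a fixed transposition of two odd positions.)
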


\begin{proof}
 Multiplying a PAP by a transposition $(1,n)$ if $n$ is odd, or by $(1,n-1)$ if $n$ is even, we obtain a PAP having opposite parity. This multiplication means swapping the first and the last odd integer of a PAP in standard representation. It creates a bijection between $P^e_n$ and $P^o_n$.
\end{proof}

By applying Proposition \ref{propequal} and considering $p(x)$, we get:

\begin{cor}\label{cor2}
The exponential generating functions of the sequences $\{p_n^e\}_{n\geq 0}$ and $\{p_n^o\}_{n\geq 0}$ have the closed forms
\begin{align*}
P^e(x)=\displaystyle\frac{1}{2}\left(P(x)+\frac{x^2}{2}+x+1\right)\quad \text{ and }\quad P^o(x)=\displaystyle\frac{1}{2}\left(P(x)-\frac{x^2}{2}-x-1\right).
\end{align*}
\hfill$\square$
\end{cor}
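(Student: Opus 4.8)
The plan is to combine the two facts already established: the trivial partition identity $p_n=p_n^e+p_n^o$, which at the level of exponential generating functions reads $P(x)=P^e(x)+P^o(x)$, and Proposition \ref{propequal}, which tells us that $p_n^e=p_n^o$ for every $n\geq 3$. The only piece of information not supplied by Proposition \ref{propequal} is the behaviour of the difference $p_n^e-p_n^o$ for the small indices $n=0,1,2$, and supplying it is the whole content of the proof.

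First I would form the difference generating function $P^e(x)-P^o(x)=\sum_{n\geq 0}(p_n^e-p_n^o)\frac{x^n}{n!}$. By Proposition \ref{propequal} every term with $n\geq 3$ vanishes, so this series is in fact a polynomial of degree at most $2$. Its three remaining coefficients are read off Table \ref{table_PAP_e_o}; equivalently, for $n\in\{0,1,2\}$ there is a unique PAP of length $n$, namely the identity, which is even, so $p_n^e=1$ and $p_n^o=0$ in each of these cases. Hence $p_0^e-p_0^o=p_1^e-p_1^o=p_2^e-p_2^o=1$, and therefore
\[
P^e(x)-P^o(x)=1+x+\frac{x^2}{2}.
\]

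Finally I would solve the resulting $2\times 2$ linear system: adding and subtracting $P(x)=P^e(x)+P^o(x)$ and the displayed identity gives
\[
P^e(x)=\frac{1}{2}\left(P(x)+\frac{x^2}{2}+x+1\right),\qquad
P^o(x)=\frac{1}{2}\left(P(x)-\frac{x^2}{2}-x-1\right),
\]
which is the claim, with $P(x)$ given by the closed form in Theorem \ref{thmpigen}. There is no genuine obstacle in this argument; the only point that needs a moment's care is that Proposition \ref{propequal} is valid only for $n\geq 3$, so the three boundary terms $n=0,1,2$ must be put in by hand — and this is precisely what produces the correction polynomial $\tfrac{x^2}{2}+x+1$.
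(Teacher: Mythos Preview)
Your argument is correct and is exactly the approach the paper intends: the corollary is stated with only a $\square$ after the remark ``By applying Proposition \ref{propequal} and considering $P(x)$, we get,'' and what you have written is precisely the two-line computation this sentence is pointing to. The only content is indeed the identification of the degree-$2$ polynomial $P^e(x)-P^o(x)=1+x+\tfrac{x^2}{2}$ from the boundary values $n=0,1,2$, which you handle correctly.
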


\section{Parity Alternating Derangements (PADs)}\label{section3}
As a result of the bijection $\Phi_n$ in the introduction above, we can determine the number $\mathfrak{d}_n$ of PADs over $[n]$ as follows:
\begin{align*}
    \mathfrak{d}_{n}=d_{\lceil n/2\rceil}d_{\lfloor n/2\rfloor}=\sum_{j=0}^{\lceil n/2\rceil}\sum_{i=0}^{\lfloor n/2\rfloor} \lceil n/2\rceil ! \lfloor n/2\rfloor !\frac{(-1)^{i+j}}{j!\,i!}.
\end{align*}

\begin{table}[ht!]
\centering
\begin{tabular}{c|cccccccccc}
$n$ & 0 & 1 & 2 & 3 & 4 & 5 & 6 & 7 & 8 & 9 \\ 
\hline 
$d_n$ & 1 & 0 & 1 & 2 & 9 & 44 & 265 & 1854 & 14833 & 133496 \\ 
$\mathfrak{d}_n$ & 1 & 0 & 0 & 0 & 1 & 2 & 4 & 18 & 81 & 396  \\ 
\end{tabular} 
\caption{First few values of $d_n$ and $\mathfrak{d}_n$.}
\label{table_PAD_n}
\end{table}

In the next theorem we give a formula for the number of PADs, connected to the relation (\ref{recd1}).
\begin{thm}\label{thmpad1}
The number of PADs over $[n]$ satisfy the recurrence relation
\begin{align}\label{recpad1}
    \mathfrak{d}_{n}=s\big(\mathfrak{d}_{n-1}+(n-2-s)\left(\mathfrak{d}_{n-3}+\mathfrak{d}_{n-4}\right)\big),
\end{align}
where $s=\big\lfloor\frac{n-1}{2} \big\rfloor=\frac{2n-3-(-1)^n}{4}$, for $n\geq 4$, with initial conditions $\mathfrak{d}_0=1$, $\mathfrak{d}_1=0$, $\mathfrak{d}_2=0$ and $\mathfrak{d}_3=0$.
\end{thm}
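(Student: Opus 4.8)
The plan is to work directly from the product formula $\mathfrak{d}_n=d_{\lceil n/2\rceil}d_{\lfloor n/2\rfloor}$ that we already have from the bijection $\Phi_n$, combined with the classical recurrence (\ref{recd1}), $d_k=(k-1)(d_{k-1}+d_{k-2})$, treating $n$ even and $n$ odd separately. First I would record the bookkeeping: writing $n=2m$ or $n=2m+1$, one has $s=\lfloor(n-1)/2\rfloor$ equal to $m-1$ in the even case and $m$ in the odd case, while $n-2-s=m-1$ in \emph{both} cases; this is the observation that lets a single closed formula hold for all $n\ge 4$. The initial values $\mathfrak{d}_0=1$, $\mathfrak{d}_1=\mathfrak{d}_2=\mathfrak{d}_3=0$ are immediate from $\mathfrak{d}_k=d_{\lceil k/2\rceil}d_{\lfloor k/2\rfloor}$.

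For $n=2m$ we have $\mathfrak{d}_{2m}=d_m^2$, $\mathfrak{d}_{2m-1}=d_md_{m-1}$, $\mathfrak{d}_{2m-3}=d_{m-1}d_{m-2}$ and $\mathfrak{d}_{2m-4}=d_{m-2}^2$. Applying (\ref{recd1}) to one factor of $d_m^2$, and then to the $d_m$ surviving inside the resulting term $d_md_{m-2}$, gives
\[
d_m^2=(m-1)\big(d_md_{m-1}+d_md_{m-2}\big)=(m-1)\big(d_md_{m-1}+(m-1)d_{m-2}(d_{m-1}+d_{m-2})\big),
\]
which is exactly $s\big(\mathfrak{d}_{n-1}+(n-2-s)(\mathfrak{d}_{n-3}+\mathfrak{d}_{n-4})\big)$. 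For $n=2m+1$ we have $\mathfrak{d}_{2m+1}=d_{m+1}d_m$, $\mathfrak{d}_{2m}=d_m^2$, $\mathfrak{d}_{2m-2}=d_{m-1}^2$ and $\mathfrak{d}_{2m-3}=d_{m-1}d_{m-2}$; expanding $d_{m+1}=m(d_m+d_{m-1})$ and then $d_{m-1}d_m=(m-1)d_{m-1}(d_{m-1}+d_{m-2})$ yields
\[
d_{m+1}d_m=m\big(d_m^2+d_{m-1}d_m\big)=m\big(d_m^2+(m-1)(d_{m-1}^2+d_{m-1}d_{m-2})\big),
\]
again equal to $s\big(\mathfrak{d}_{n-1}+(n-2-s)(\mathfrak{d}_{n-3}+\mathfrak{d}_{n-4})\big)$.

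There is no substantive obstacle: the content is just keeping the index translation $n\leftrightarrow m$ straight across the two parity classes, the one delicate point being the verification that $n-2-s$ takes the same value $m-1$ in both. If a bijective proof is preferred, in the style of the rest of the paper, the same computation can be realized combinatorially: apply the Mantaci--Rakotondrajao map $\psi$ — via the restrictions $\psi_k^{(1)}\colon D_k^{(1)}\to[k-1]\times D_{k-1}$ and $\psi_k^{(2)}\colon D_k^{(2)}\to[k-1]\times D_{k-2}$ — to the even part of a PAD when $n$ is even and to the odd part when $n$ is odd, then apply $\psi$ a second time to the remaining factor exactly as the two expansions above prescribe. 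The resulting disjoint blocks are $[s]\times\mathfrak{D}_{n-1}$, $[s]\times[n-2-s]\times\mathfrak{D}_{n-3}$ and $[s]\times[n-2-s]\times\mathfrak{D}_{n-4}$, which counts out the claimed recurrence.
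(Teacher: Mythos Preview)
Your proof is correct. The primary algebraic argument is a genuinely different route from the paper's: the paper gives a direct bijective construction of a map $\Psi\colon \mathfrak{D}_n \to [s]\times\bigl(\mathfrak{D}_{n-1}\cup [n-2-s]\times(\mathfrak{D}_{n-3}\cup\mathfrak{D}_{n-4})\bigr)$, case-splitting on whether the largest element of the appropriate component lies in a long cycle or a transposition and invoking $\psi^{(1)}$ or $\psi^{(2)}$ accordingly. You instead verify the identity numerically from the product formula $\mathfrak{d}_n=d_{\lceil n/2\rceil}d_{\lfloor n/2\rfloor}$ and two applications of~(\ref{recd1}); this is shorter and makes transparent why the single formula works uniformly (your observation that $n-2-s=m-1$ in both parity cases). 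The bijective sketch you append at the end is essentially the paper's own argument. What the paper's approach buys is reusability: the explicit bijection $\Psi$ is invoked again in the proof of Corollary~\ref{rec_pad-parity} to track how parity behaves under the map, whereas your algebraic route would require a parallel computation from~(\ref{derpar}) to recover that refinement.
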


\begin{proof}
The proof is splitted into two cases, for PADs over a set of odd and even sizes. Let $(\delta_1, \delta_2)$ be the corresponding  pair of a PAD $\delta\in \mathfrak{D}_n$ under the mapping $\Phi_n$. Define a mapping $\Psi: \mathfrak{D}_{n} \longrightarrow [s]\times\left(\mathfrak{D}_{n-1}\cup [n-2-s]\times (\mathfrak{D}_{n-3}\cup \mathfrak{D}_{n-4})\right)
$ as follows:

\textbf{Case I:} for odd $n$, depending on the following two elective properties of $\delta$, the mapping $\Psi$ will be defined as:
\begin{enumerate}
    \item in the event of the largest entry $\frac{n+1}{2}$ of $\delta_1$ being in a cycle of length greater than 2, we let
    \begin{align*}
        \Psi(\delta)=\left(i,\,\Phi_{n}^{-1} (\delta'_1,\, \delta_2 )\right),
    \end{align*}
    where $(i,\delta'_1)=\psi_{\frac{n+1}{2}}^{(1)}(\delta_1)$;
   \item in the event when $\frac{n+1}{2}$ lies in a transposition in $\delta_1$, we distinguish two cases:
   \begin{itemize}
       \item if the largest entry $\frac{n-1}{2}$ of $\delta_2$ is contained in a cycle of length greater than 2, then
       \begin{align*}
        \Psi(\delta)=\left(i,\,j,\,\Phi_{n}^{-1} (\delta'_1,\, \delta'_2)\right),
    \end{align*}
    where $(i,\,\delta'_1)=\psi_{\frac{n+1}{2}}^{(2)}(\delta_1)$ and $(j,\,\delta'_2)=\psi_{\frac{n-1}{2}}^{(1)}(\delta_2)$;
    \item if $\frac{n-1}{2}$ is contained in a cycle of length 2 in $\delta_2$, then
    \begin{align*}
        \Psi(\delta)=\left(i,\,j,\,\Phi_{n}^{-1} (\delta'_1,\, \delta'_2)\right),
    \end{align*}
    where $(i,\,\delta'_1)=\psi_{\frac{n+1}{2}}^{(2)}(\delta_1)$ and $(j,\,\delta'_2)=\psi_{\frac{n-1}{2}}^{(2)}(\delta_2)$.
   \end{itemize}
\end{enumerate}
\textbf{Case II:} for even $n$,
\begin{enumerate}
    \item in the event of the largest entry $\frac{n}{2}$ of $\delta_2$ lies in a cycle of length greater than 2, we let
    \begin{align*}
        \Psi(\delta)=\left(i,\,\Phi_{n}^{-1} (\delta_1,\, \delta'_2)\right),
    \end{align*}
    where $(i,\,\delta'_2)=\psi_{\frac{n}{2}}^{(1)}(\delta_2)$;
   \item in the event when $\frac{n}{2}$ being in a transposition in $\delta_2$, we distinguish two cases:
   \begin{itemize}
       \item if the largest entry $\frac{n}{2}$ in $\delta_1$ contained in a cycle of length greater than 2, then
       \begin{align*}
        \Psi(\delta)=\left(i,\,j,\,\Phi_{n}^{-1} (\delta'_1,\, \delta'_2)\right),
    \end{align*}
    where $(i,\,\delta'_1)=\psi_{\frac{n}{2}}^{(1)}(\delta_1)$ and $(j,\,\delta'_2)=\psi_{\frac{n}{2}}^{(2)}(\delta_2)$;
    \item if $\frac{n}{2}$ contained in a cycle of length 2 in $\delta_1$, then
    \begin{align*}
        \Psi(\delta)=\left(i,\,j,\,\Phi_{2n}^{-1} (\delta'_1,\, \delta'_2)\right),
    \end{align*}
    where $(i,\,\delta'_1)=\psi_{\frac{n}{2}}^{(2)}(\delta_1)$ and $(j,\,\delta'_2)=\psi_{\frac{n}{2}}^{(2)}(\delta_2)$.
   \end{itemize}
\end{enumerate}
Since $\psi_n$ is a bijection for any $n\geq 2$, one can easily conclude that $\Psi$ is a bijection too. Note that in both cases there are $\big\lfloor\frac{n-1}{2}\big\rfloor=s$ possibilities for $i$ and $\big\lfloor\frac{n-2}{2}\big\rfloor=n-2-s$ possibilities for $j$. Thus, the formula in the Theorem follows.
\end{proof}

The next theorem is connected to the relation (\ref{recd2}).

\begin{thm}
The number $\mathfrak{d}_{n}$ of PADs also satisfies the relation
\begin{align}\label{recpad2}
    \mathfrak{d}_{n}=s\mathfrak{d}_{n-1}+(-1)^{s}d_{n-s},
\end{align}
where $s=\big\lceil\frac{n}{2}\big\rceil=\frac{2n+1-(-1)^n}{4}$, for $n\geq 1$ with $d_1=0$, $d_0=1$ and $\mathfrak{d}_0=1$.
\end{thm}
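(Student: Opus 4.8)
The quickest route is algebraic: from the identity $\mathfrak{d}_n=d_{\lceil n/2\rceil}d_{\lfloor n/2\rfloor}$ and relation (\ref{recd2}) one gets (\ref{recpad2}) by a one-line substitution (for $n=2m$, $\mathfrak{d}_{2m}=d_m\cdot d_m=d_m(md_{m-1}+(-1)^m)$, and similarly for odd $n$). Since the paper has set up $\tau_n$ and $\zeta_n$ precisely for this kind of statement, however, I would give the bijective version, which transports the bijection behind (\ref{recd2}) across $\Phi_n$. Write $s=\lceil n/2\rceil$, so $n-s=\lfloor n/2\rfloor$ and $\mathfrak{d}_n=d_s\,d_{n-s}$ via $\Phi_n$. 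The key preliminary remark is that in a PAD $\delta\in\mathfrak{D}_n$ the largest integer $n$ lies in the odd part $\delta_1$ when $n$ is odd and in the even part $\delta_2$ when $n$ is even, and in either case it is the \emph{largest} entry, namely $s$, of that component, which is a derangement in $D_s$. Call $\alpha\in D_s$ this distinguished component of $\Phi_n(\delta)=(\delta_1,\delta_2)$ and $\beta$ the other one; a direct check of $\lceil(n-1)/2\rceil$ and $\lfloor(n-1)/2\rfloor$ against both parities shows $\beta\in D_{n-s}$ and $\mathfrak{d}_{n-1}=d_{s-1}\,d_{n-s}$, with $\beta$ itself occurring as the same-parity part of a PAD over $[n-1]$.

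Next I would partition $\mathfrak{D}_n$ according to whether $\alpha\in E_s$. If $\alpha\notin E_s$, apply $\zeta_s$ to $\alpha$ to obtain $(i,\alpha')\in([s]\times D_{s-1})\setminus F_s$, and then feed $(\alpha',\beta)$ into $\Phi_{n-1}^{-1}$ in the order dictated by the parity of $n$ (odd part first), where $\alpha'$ takes over the role of the same-parity part formerly played by $\alpha$. Because $\Phi_n$, $\Phi_{n-1}$ and $\zeta_s$ are all bijections, this gives a bijection between $\{\delta\in\mathfrak{D}_n:\alpha\notin E_s\}$ and $[s]\times\mathfrak{D}_{n-1}$ with the image under $\mathrm{id}\times\Phi_{n-1}^{-1}$ of $F_s\times D_{n-s}$ deleted; hence this part has cardinality $s\,\mathfrak{d}_{n-1}-|F_s|\,d_{n-s}$. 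The complementary part $\{\delta\in\mathfrak{D}_n:\alpha\in E_s\}$ is, via $\Phi_n$, in bijection with $E_s\times D_{n-s}$, so it has cardinality $|E_s|\,d_{n-s}$.

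It then remains to add the two counts. Since $E_s=\{\Delta_s\}$ and $F_s=\emptyset$ for even $s$, while $E_s=\emptyset$ and $F_s=\{(s,\Delta_{s-1})\}$ for odd $s$, we have $|E_s|-|F_s|=(-1)^s$, and therefore
\[
\mathfrak{d}_n=\bigl(s\,\mathfrak{d}_{n-1}-|F_s|\,d_{n-s}\bigr)+|E_s|\,d_{n-s}=s\,\mathfrak{d}_{n-1}+(-1)^s d_{n-s}.
\]
The very small cases $n=1,2,3$ (where some of $D_s$, $D_{s-1}$, $\mathfrak{D}_{n-1}$ are empty) should be checked directly against the stated initial conditions, and I would mention the algebraic substitution above as an independent confirmation.

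The main obstacle is purely bookkeeping rather than conceptual: one must verify that the untouched component $\beta$ is genuinely the matching same-parity part of a PAD over $[n-1]$, so that $\Phi_{n-1}^{-1}(\alpha',\beta)$ (resp.\ $\Phi_{n-1}^{-1}(\beta,\alpha')$ for the other parity of $n$) is a legitimate PAD, and one must track the single exceptional pair in $F_s$ and the single derangement $\Delta_s\in E_s$ correctly through $\Phi_n$ and $\Phi_{n-1}$ so that the two $\pm1$ contributions are attributed to the right parity of $s$.
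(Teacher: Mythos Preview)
Your proposal is correct and follows essentially the same approach as the paper: both first observe that the identity is an immediate algebraic consequence of $\mathfrak{d}_n=d_{\lceil n/2\rceil}d_{\lfloor n/2\rfloor}$ together with relation~(\ref{recd2}), and both then transport Rakotondrajao's bijection $\zeta_s=\tau_s^{-1}$ across $\Phi_n$ to the component of size $s$, leaving the other component untouched. Your write-up is in fact more careful than the paper's about tracking the exceptional sets $E_s$ and $F_s$ and assembling the $(-1)^s$ term from $|E_s|-|F_s|$, whereas the paper records the two bijections $Z_0$, $Z_1$ (one for each parity of $n$) somewhat tersely and leaves the cardinality count implicit.
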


\begin{proof}
Distinguishing by means of the parity of $n$, we can write the relation (\ref{recpad2}) as:
 \begin{align*}
     &\mathfrak{d}_{n}=d_{s}\big(s\, d_{s-1}+(-1)^{s}\big)\,\,\text{ when } n \text{ is even,  and }\\ &\mathfrak{d}_{n}=\big(s\,d_{s-1}+(-1)^{s}\big)\,d_{s-1}\,\,\text{ when } n \text{ is odd}.
 \end{align*} 
 Now, take a PAD $\delta$ in $\mathfrak{D}_{n}$ and introduce two mappings as: 
 \begin{itemize}
     \item  $Z_{0}: \big(D_s\backslash E_s\big)\times D_s \longrightarrow [s]\times \big(D_{s-1}\backslash F_s\big)\times D_{s}$ by 
 \begin{align*}
     \delta \xrightleftharpoons[\Phi^{-1}_{n}]{\Phi_{n}}(\delta_1, \delta_2)\xrightleftharpoons[id_s\times \tau_s]{ id_s\times \zeta_s}\big(\delta_1, (i,\delta'_2)\big)\xrightleftharpoons[h]{h}(i,(\delta_1, \delta'_2))\xrightleftharpoons[id\times \Phi_n]{id\times \Phi_n^{-1}}\big(i,\Phi_{n}^{-1}(\delta_1, \delta'_2)\big),
 \end{align*}
 and 
\item $Z_{1}: \big(D_{s}\backslash E_{s}\big)\times D_{s-1} \longrightarrow [s]\times \big(D_{s-1}\backslash F_{s}\big)\times D_{s-1}$ by 
 \begin{align*}
     \delta \xrightleftharpoons[\Phi^{-1}_{n}]{\Phi_{n}}(\delta_1, \delta_2)\xrightleftharpoons[\tau_{s}\times id_{s-1}]{\zeta_{s}\times id_{s-1}}\big((i,\delta'_1),\delta_2\big)\xrightleftharpoons[h_2]{h_1}\big(i,(\delta'_1, \delta_2)\big)\xrightleftharpoons[id\times \Phi_n^{-1}]{id\times \Phi_n}\big(i,\Phi_{n}^{-1}(\delta'_1, \delta_2)\big).
 \end{align*}

\end{itemize}
Note that $h$, $h_1$, and $h_2$ are the obvious recombination maps. Since all the functions we used to define the two mappings $Z_{0}$ and $Z_{1}$ are injective, both $Z_{0}$ and $Z_{1}$ are bijection mappings.
\end{proof}

In order to classify PADs with respect to their parity, we let $\mathfrak{D}_n^e$ and $\mathfrak{D}_n^o$ denote the set of even and odd PADs over $[n]$, respectively. Moreover, $\mathfrak{d}^e_n=|\mathfrak{D}_n^e|$ and $\mathfrak{d}^o_n=|\mathfrak{D}_n^o|$. Obviously, $\mathfrak{d}_n=\mathfrak{d}^e_n+\mathfrak{d}^o_n$.

\begin{table}[ht!]
\centering
\begin{tabular}{c|ccccccccccc}
$n$ & 0 & 1 & 2 & 3 & 4 & 5 & 6 & 7 & 8 & 9 & 10 \\ 
\hline 
$\mathfrak{d}_n^e$ & 1 & 0 & 0 & 0 & 1 & 0 & 4 & 6 & 45 & 192 & 976\\ 
$\mathfrak{d}_n^o$ & 0 & 0 & 0 & 0 & 0 & 2 & 0 & 12 & 36 & 204 & 960
\end{tabular}
\caption{First few values of the number of even and odd PADs.}
\label{table_eopads}
\end{table}

\begin{pro}\label{thm3}
The numbers of even and odd PADs satisfy the relations
\begin{align*}
    &\mathfrak{d}^e_{n}=d^e_{\lfloor\frac{n}{2}\rfloor}d^e_{\lceil\frac{n}{2}\rceil}+d^o_{\lfloor\frac{n}{2}\rfloor}d^o_{\lceil\frac{n}{2}\rceil}, \\
    &\mathfrak{d}^o_{n}=d^e_{\lfloor\frac{n}{2}\rfloor} d^o_{\lceil\frac{n}{2}\rceil}+d^e_{\lceil\frac{n}{2}\rceil}d^o_{\lfloor\frac{n}{2}\rfloor},
\end{align*}
for $n\geq 0$, with initial conditions $d^e_0=1$, $d^e_1=0$, $d^o_0=0$, and $d^o_1=0$.
\end{pro}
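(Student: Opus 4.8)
The plan is to read both identities off the bijection $\Phi_n=\phi_n|_{\mathfrak{D}_n}\colon\mathfrak{D}_n\longrightarrow D_{\lceil n/2\rceil}\times D_{\lfloor n/2\rfloor}$ from the preliminaries, once we know how the parity of a PAD $\delta$ is controlled by the parities of its odd part $\delta_1$ and its even part $\delta_2$. First I would recall the structural fact already observed above: in the disjoint cycle representation of a PAD every cycle consists of integers of a single parity, so the cycles of $\delta$ split into the cycles on odd entries (which, after the order-preserving relabelling $x\mapsto(x+1)/2$, constitute $\delta_1\in D_{\lceil n/2\rceil}$) and the cycles on even entries (which, after $x\mapsto x/2$, constitute $\delta_2\in D_{\lfloor n/2\rfloor}$). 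Since these relabellings are bijections that neither merge nor break cycles, the cycle counts satisfy $c=c_1+c_2$, where $c,c_1,c_2$ are the numbers of cycles of $\delta,\delta_1,\delta_2$.

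The key step is then a one-line parity computation. Using $\operatorname{sgn}(\pi)=(-1)^{m-c}$ for a permutation of an $m$-element set with $c$ cycles, together with $\lceil n/2\rceil+\lfloor n/2\rfloor=n$, we get
\[
\operatorname{sgn}(\delta)=(-1)^{n-c}=(-1)^{(\lceil n/2\rceil-c_1)+(\lfloor n/2\rfloor-c_2)}=\operatorname{sgn}(\delta_1)\operatorname{sgn}(\delta_2).
\]
Hence $\delta$ is even precisely when $\delta_1$ and $\delta_2$ have the same parity, and $\delta$ is odd precisely when they have opposite parities.

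Combining this with the bijectivity of $\Phi_n$ finishes the argument. Restricting $\Phi_n$ to $\mathfrak{D}_n^e$ yields a bijection onto $\bigl(D_{\lceil n/2\rceil}^e\times D_{\lfloor n/2\rfloor}^e\bigr)\cup\bigl(D_{\lceil n/2\rceil}^o\times D_{\lfloor n/2\rfloor}^o\bigr)$, a disjoint union, so $\mathfrak{d}_n^e=d^e_{\lceil n/2\rceil}d^e_{\lfloor n/2\rfloor}+d^o_{\lceil n/2\rceil}d^o_{\lfloor n/2\rfloor}$, which is the stated expression after using commutativity of multiplication. Similarly $\Phi_n$ restricts to a bijection from $\mathfrak{D}_n^o$ onto $\bigl(D_{\lceil n/2\rceil}^e\times D_{\lfloor n/2\rfloor}^o\bigr)\cup\bigl(D_{\lceil n/2\rceil}^o\times D_{\lfloor n/2\rfloor}^e\bigr)$, giving the second identity. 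The stated initial values are just the $n\le 1$ instances (e.g. $\mathfrak{d}_0^e=d_0^e d_0^e=1$, $\mathfrak{d}_0^o=0$), checked by inspection.

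I do not expect a genuine obstacle: the only point requiring care is the verification that the relabellings underlying $\phi_n$ preserve the number of cycles, so that $c=c_1+c_2$ holds exactly; everything else is the already-established bijectivity of $\Phi_n$ plus the sign identity. If one preferred to avoid sign manipulations, an alternative would be to first establish recurrences for $\mathfrak{d}_n^e$ and $\mathfrak{d}_n^o$ analogous to (\ref{derpar}), by tracking parities through the map $\Psi$ used in the proof of Theorem \ref{thmpad1}, and then solve them; but the direct route through $\Phi_n$ is shorter and more transparent, and it also makes clear why these formulas are exactly the "even/odd refinement" of $\mathfrak{d}_n=d_{\lceil n/2\rceil}d_{\lfloor n/2\rfloor}$.
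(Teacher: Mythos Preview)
Your proof is correct and follows the same approach as the paper: use the bijection $\Phi_n$ and observe that $\delta$ is even (resp.\ odd) exactly when $\delta_1$ and $\delta_2$ have equal (resp.\ opposite) parities. In fact you supply more detail than the paper does, since you justify this parity observation via the cycle-count identity $c=c_1+c_2$ and $\operatorname{sgn}(\delta)=\operatorname{sgn}(\delta_1)\operatorname{sgn}(\delta_2)$, which the paper simply asserts.
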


\begin{proof}
Let $\delta$ be a PAD over $[n]$. Then, there exist $\delta_1\in D_{\lceil\frac{n}{2}\rceil}$ and $\delta_2\in D_{\lfloor\frac{n}{2}\rfloor}$ such that $\Phi_n(\delta)=(\delta_1, \delta_2)$. If $\delta\in \mathfrak{D}_n^e$, then $\delta_1$ and $\delta_2$ must have the same parity. Thus, $\mathfrak{d}^e_{n}=d^e_{\lfloor\frac{n}{2}\rfloor}d^e_{\lceil\frac{n}{2}\rceil}+d^o_{\lfloor\frac{n}{2}\rfloor}d^o_{\lceil\frac{n}{2}\rceil}$. If $\delta\in \mathfrak{D}_n^e$, then $\delta_1$ and $\delta_2$ must have opposite parities. Hence, $\mathfrak{d}^o_{n}=d^e_{\lfloor\frac{n}{2}\rfloor} d^o_{\lceil\frac{n}{2}\rceil}+d^e_{\lceil\frac{n}{2}\rceil}d^o_{\lfloor\frac{n}{2}\rfloor}$.
\end{proof}

\begin{cor}\label{rec_pad-parity}The number of PADs with even parity and with odd parity satisfy the recurrence relations
\begin{align}
    &\mathfrak{d}_{n}^e=s\left(\mathfrak{d}_{n-1}^o+(n-2-s)(\mathfrak{d}_{n-3}^e+\mathfrak{d}_{n-4}^e)\right)\label{padpar1}\\
    &\mathfrak{d}_{n}^o=s\left(\mathfrak{d}_{n-1}^e+(n-2-s)(\mathfrak{d}_{n-3}^o+\mathfrak{d}_{n-4}^o)\right),\label{padpar2}
\end{align}
where $s=\big\lfloor\frac{n-1}{2} \big\rfloor=\frac{2n-3-(-1)^n}{4}$, for $n\geq 4$ with initial conditions $\mathfrak{d}_0^e=1$, $\mathfrak{d}_0^o=0$, and $\mathfrak{d}_i^e=\mathfrak{d}_i^o=0$ for $i=1,2,3$.
\end{cor}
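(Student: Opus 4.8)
The plan is to refine the bijection $\Psi$ constructed in the proof of Theorem~\ref{thmpad1} so that it becomes parity-sensitive; the recurrences \eqref{padpar1} and \eqref{padpar2} will then fall out of the very same case analysis. The one extra ingredient is a short sign computation. First I would record that if $\Phi_n(\delta)=(\delta_1,\delta_2)$ then the sign of $\delta$ equals the product of the signs of $\delta_1$ and $\delta_2$: each cycle of $\delta$ consists of integers of a single parity, so if $\delta_1$ has $c_1$ cycles and $\delta_2$ has $c_2$ cycles, then $\delta$ has $c_1+c_2$ cycles and $(-1)^{\lceil n/2\rceil-c_1}(-1)^{\lfloor n/2\rfloor-c_2}=(-1)^{n-(c_1+c_2)}$. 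In particular $\delta$ is an even PAD exactly when $\delta_1$ and $\delta_2$ have the same parity, which is the fact already underlying Proposition~\ref{thm3}.

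Second, I would check that both restricted Mantaci--Rakotondrajao maps $\psi_m^{(1)}$ and $\psi_m^{(2)}$ reverse parity. Removing $m$ from a cycle of length $\ge 3$ keeps the number of cycles but decreases the ground set by one, hence flips the sign; removing a transposition $(i\ m)$ and relabelling decreases the number of cycles by one and the ground set size by two, hence again flips the sign. (Equivalently, this is the statement that \eqref{derpar} is the parity-refinement of \eqref{recd1}.)

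Armed with these, I would walk through the four branches of $\Psi$ exactly as in the proof of Theorem~\ref{thmpad1}. In the two ``type~(1)'' branches (the pertinent largest entry lying in a cycle of length $>2$) only one of $\delta_1,\delta_2$ is changed, by a single $\psi^{(1)}$, so the recombined image, an element of $\mathfrak{D}_{n-1}$, has parity \emph{opposite} to $\delta$; in the ``type~(2)'' branches both $\delta_1$ and $\delta_2$ are changed, each by one application of $\psi^{(1)}$ or $\psi^{(2)}$, so the parity is flipped twice and the image in $\mathfrak{D}_{n-3}$ or $\mathfrak{D}_{n-4}$ has the \emph{same} parity as $\delta$ (this is insensitive to which of the two sub-branches occurs, provided one feeds $\Phi_{n-3}^{-1}$ resp.\ $\Phi_{n-4}^{-1}$ the parts of the correct sizes). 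Hence $\Psi$ restricts to a bijection from $\mathfrak{D}_n^e$ onto $[s]\times\mathfrak{D}_{n-1}^o$ together with $[s]\times[n-2-s]\times(\mathfrak{D}_{n-3}^e\cup\mathfrak{D}_{n-4}^e)$, which gives \eqref{padpar1}, and from $\mathfrak{D}_n^o$ onto $[s]\times\mathfrak{D}_{n-1}^e$ together with $[s]\times[n-2-s]\times(\mathfrak{D}_{n-3}^o\cup\mathfrak{D}_{n-4}^o)$, which gives \eqref{padpar2}. The initial conditions come straight from Proposition~\ref{thm3} (for instance $\mathfrak{d}_0^e=(d_0^e)^2+(d_0^o)^2=1$ and $\mathfrak{d}_i^e=\mathfrak{d}_i^o=0$ for $i=1,2,3$), and the range $n\ge 4$ is exactly where the bijection of Theorem~\ref{thmpad1} lives.

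The only delicate point --- the main obstacle, modest as it is --- is the sign bookkeeping: confirming that $\psi_m^{(2)}$ drops the cycle count by exactly one (so that, like $\psi_m^{(1)}$, it reverses parity) and that $\Phi_n^{-1}$ multiplies signs, so that altering one part flips the parity of the PAD while altering two parts preserves it. Beyond that, nothing new is needed past Theorem~\ref{thmpad1}. A purely computational alternative is available too: substitute the formulas of Proposition~\ref{thm3} into both sides of \eqref{padpar1}--\eqref{padpar2}, split into the cases $n=2m$ and $n=2m+1$, and collapse the result using the derangement parity recurrences \eqref{derpar}; this is only a few lines.
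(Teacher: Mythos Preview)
Your proposal is correct and follows essentially the same approach as the paper: the paper's proof likewise reduces everything to the observation that $\psi_n$ (in both its restrictions $\psi_n^{(1)}$ and $\psi_n^{(2)}$) reverses parity, via exactly the sign computation you give, and then defers to the bijection $\Psi$ of Theorem~\ref{thmpad1}. Your write-up is more explicit about the branch-by-branch parity tracking and the multiplicativity of sign under $\Phi_n$, but the underlying argument is the same.
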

\begin{proof}
It is enough to clarify the effect of the bijection $\psi_n$ on the parity of a derangement over $[n]$, the rest is just applying the bijection $\Psi$ from the proof of Theorem \ref{thmpad1}. 

Letting $\delta$ be in $D_n$, $\delta'$ has sign either $(-1)^{n-1-c}$ if $\delta\in D_n^{(1)}$, or $(-1)^{n-2-(c-1)}=(-1)^{n-1-c}$  if $\delta\in D_n^{(2)}$. Here $\delta'$ is the derangement obtained from $\delta$ by applying $\psi_n$, and $c$ is the number of cycles in the cycle representation of $\delta$. This means, the bijection $\psi_n$ changes the parity of a derangement.
\end{proof}

\begin{defn}
Let $\delta$ be a derangement over $[n]$ in standard cycle representation and let $C_1=(1 \,\, a_2 \,\, \cdots \,\, a_m)$ be the first cycle. Following \cite{benjamin2005recounting}, an extraction point of $\delta$ is an entry $e\geq 2$ if $e$ is the smallest number in the set $\{2,\ldots,n\}\backslash\{a_2\}$ for which $C_1$ does not end with the numbers of $\{2,\ldots,e\}\backslash\{a_2\}$ written in decreasing order. The $(n-1)$ derangements, $\delta_{n,i}=(1\,\, i \,\, n \,\, n-1\,\, \cdots \,\, i+2 \,\, i+1 \,\, i-1 \,\, i-2 \,\, \cdots \,\, 3 \,\, 2)$ for $i\in[2, n]$, that do not have extraction points are called the exceptional derangements and the set of exceptional derangements is denoted by  $X_n$. Note that the extraction point (if it exists) must belong to the first or the second cycle.
\end{defn}
Following this approach we may introduce:
\begin{defn} 
We call the PAD 
\begin{align*}
    \Phi_n^{-1}(\delta_{\lceil\frac{n}{2}\rceil,i},\,\delta_{\lfloor\frac{n}{2}\rfloor,j}), \text{ for } i\in\big[2, \lceil n/2\rceil\big] \text{ and } j\in\big[2, \lfloor n/2\rfloor\big]
\end{align*}
 an exceptional PAD and we let $\mathcal{X}_n$ denote the set containing them. 
\end{defn}

\begin{exa}
If $n=8$, then 
\begin{align*}
  \mathcal{X}_8&=\{\Phi_8^{-1}(\delta_{4,2},\,\delta_{4,2}),\,\,\Phi_8^{-1}(\delta_{4,2},\,\delta_{4,3}),\,\,\Phi_8^{-1}(\delta_{4,2},\,\delta_{4,4}),\,\,\Phi_8^{-1}(\delta_{4,3},\,\delta_{4,2}),\,\,\Phi_8^{-1}(\delta_{4,3},\,\delta_{4,3}),\\
  &\quad\quad\Phi_8^{-1}(\delta_{4,3},\,\delta_{4,4}),\,\,\Phi_8^{-1}(\delta_{4,4},\,\delta_{4,2}),\,\,\Phi_8^{-1}(\delta_{4,4},\,\delta_{4,3}),\,\,\Phi_8^{-1}(\delta_{4,4},\,\delta_{4,4})\}\\
   &=\{(1\,3\,7\,5)(2\,4\,8\,6),\,\,(1\,3\,7\,5)(2\,6\,8\,4),\,\,(1\,3\,7\,5)(2\,8\,6\,4),\,\,(1\,5\,7\,3)(2\,4\,8\,6),\,\,(1\,5\,7\,3)\\&\quad\quad(2\,6\,8\,4),\,\,(1\,5\,7\,3)(2\,8\,6\,4),\,\,(1\,7\,5\,3)(2\,4\,8\,6),\,\,(1\,7\,5\,3)(2\,6\,8\,4),\,\,(1\,7\,5\,3)(2\,8\,6\,4)\}.
\end{align*}
\end{exa}
\begin{rem}
Since the exceptional derangements over $[n]$ have sign $(-1)^{n-1}$, all the exceptional PADs in $\mathcal{X}_n$ have the same parity, with sign $(-1)^{n-2}=(-1)^{n}$.
\end{rem}
\begin{rem}
As it was proved in \cite{benjamin2005recounting}, the number of the exceptional derangements in $X_n$ is the difference of the number of even and odd derangements, i.e., $d_n^e-d_n^o=(-1)^{n-1}(n-1)$. Chapman (\cite{chapman2001involution}) also provide a bijective proof for the same formula. Below we give the idea of the  proof due to Benjamin, Bennett,  and  Newberger (\cite{benjamin2005recounting}).

Let $f_n$ be the involution on $ D_n\backslash X_n$ defined by 
\begin{align*}
    f_n(\pi)=f_n\big((1\,\,a_2\,\,X\,\,e\,\,Y\,\,Z)\,\pi'\big)=(1\,\,a_2\,\,X\,\,Z)(e\,\,Y)\,\pi'
\end{align*}
for $\pi$ in $D_n\backslash X_n$ with the extraction point $e$ in the first cycle; and vice versa for the other $\pi$ in $D_n\backslash X_n$ with the extraction point $e$ in the second cycle. $a_2$ is the second element in the first cycle of $\pi$; $X$, $Y$, and $Z$ are ordered subsets of $[n]$, $Y\neq \emptyset$ and $Z$ consist the elements of $\{2,3,\ldots,e-1\}\backslash \{a_2\}$ written in decreasing order, and $\pi'$ is the rest of the derangement in $\pi$. Since the number of the cycles in $\pi$ and $f_n(\pi)$ differ by one, they must have opposite parity.
\end{rem}

Labeling $\mathfrak{d}_n^e{-}\mathfrak{d}_n^o$ as $\mathfrak{f}_n$, we have the following result.

\begin{pro}\label{diff_eg}
The difference $\mathfrak{f}_n$ counts the number of exceptional  PADs over $[n]$ and its closed formula is given by
\begin{align}\label{paddif}
\mathfrak{f}_n=(-1)^{n-2}\Big\lceil \frac{n-2}{2}\Big\rceil \Big\lfloor \frac{n-2}{2}\Big\rfloor.
\end{align}
\end{pro}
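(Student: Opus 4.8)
The plan is to reduce the statement to the corresponding fact for ordinary derangements, recalled in the Remark just above the Proposition. Subtracting the two identities of Proposition \ref{thm3} and factoring the right-hand side gives
\[
\mathfrak{f}_n=\mathfrak{d}_n^e-\mathfrak{d}_n^o=\bigl(d^e_{\lceil n/2\rceil}-d^o_{\lceil n/2\rceil}\bigr)\bigl(d^e_{\lfloor n/2\rfloor}-d^o_{\lfloor n/2\rfloor}\bigr),
\]
so the difference for PADs over $[n]$ is the product of the analogous differences for derangements on $\lceil n/2\rceil$ and on $\lfloor n/2\rfloor$ letters. By the Benjamin--Bennett--Newberger identity quoted in the Remark, $d^e_m-d^o_m=(-1)^{m-1}(m-1)$ for every $m\ge 0$. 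Substituting, and using $\lceil n/2\rceil-1=\lceil (n-2)/2\rceil$ and $\lfloor n/2\rfloor-1=\lfloor (n-2)/2\rfloor$ while the two exponents add to $(\lceil n/2\rceil-1)+(\lfloor n/2\rfloor-1)=n-2$, one obtains exactly the closed formula (\ref{paddif}).

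For the combinatorial half of the statement I would transfer, through the bijection $\Phi_n$, the parity-reversing involution $f_n$ of the Remark. Given a PAD $\delta$ with $\Phi_n(\delta)=(\delta_1,\delta_2)$, where $\delta_1\in D_{\lceil n/2\rceil}$ and $\delta_2\in D_{\lfloor n/2\rfloor}$: if $\delta_1\notin X_{\lceil n/2\rceil}$, replace $\delta_1$ by $f_{\lceil n/2\rceil}(\delta_1)$ and keep $\delta_2$; otherwise, if $\delta_1\in X_{\lceil n/2\rceil}$ but $\delta_2\notin X_{\lfloor n/2\rfloor}$, replace $\delta_2$ by $f_{\lfloor n/2\rfloor}(\delta_2)$ and keep $\delta_1$; and if both coordinates are exceptional, leave $\delta$ fixed. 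Pulling back by $\Phi_n^{-1}$ yields a map on $\mathfrak{D}_n$ which, because $f_m$ is a fixed-point-free involution on $D_m\setminus X_m$ and never alters the exceptionality of the coordinate it does not touch, is an involution whose fixed-point set is precisely $\{\Phi_n^{-1}(\delta_1,\delta_2):\delta_1\in X_{\lceil n/2\rceil},\ \delta_2\in X_{\lfloor n/2\rfloor}\}=\mathcal{X}_n$. Since applying $f_m$ to one coordinate changes the number of cycles of $\delta$ by one, and since (as in the proof of Proposition \ref{thm3}) $\delta$ is even exactly when $\delta_1$ and $\delta_2$ have equal parity, every non-trivial orbit pairs an even PAD with an odd PAD. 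Hence $\mathfrak{d}_n^e-\mathfrak{d}_n^o$ equals the number of even exceptional PADs minus the number of odd ones; by the Remark immediately preceding the Proposition all elements of $\mathcal{X}_n$ carry the same sign $(-1)^{n-2}$, so this difference is $(-1)^{n-2}|\mathcal{X}_n|$, while $|\mathcal{X}_n|=(\lceil n/2\rceil-1)(\lfloor n/2\rfloor-1)$ directly from the definition. This simultaneously reproves (\ref{paddif}) and shows that $\mathfrak{f}_n$ is the signed count of the exceptional PADs.

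I expect the only genuine work to be bookkeeping. In the first route it is merely the sign arithmetic, which is routine once the factorization is in hand. In the second route the point requiring care is that the composite map really is an involution with fixed-point set exactly $\mathcal{X}_n$; this rests on two easy features of the construction of $f_n$ in the Remark — that $f_m$ has no fixed point on $D_m\setminus X_m$, and that acting on one coordinate of $(\delta_1,\delta_2)$ neither creates nor destroys membership of the other coordinate in $X$. The sole external ingredient is the identity $d^e_m-d^o_m=(-1)^{m-1}(m-1)$, which the paper already quotes.
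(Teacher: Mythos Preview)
Your proposal is correct. The paper proves the proposition purely bijectively: it defines a sign-reversing map $F$ on $\mathfrak{D}_n\setminus\mathcal{X}_n$ that applies $f_{\lceil n/2\rceil}$ to $\delta_1$ when $n$ is odd and $f_{\lfloor n/2\rfloor}$ to $\delta_2$ when $n$ is even, leaving $\mathcal{X}_n$ as the residue. Your algebraic route---factoring $\mathfrak{d}_n^e-\mathfrak{d}_n^o$ via Proposition~\ref{thm3} and substituting $d^e_m-d^o_m=(-1)^{m-1}(m-1)$---is a genuinely different and shorter argument that the paper does not use at all; it makes the closed formula immediate once Proposition~\ref{thm3} is available. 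Your bijective route is close in spirit to the paper's but with a different coordinate-selection rule (try $\delta_1$ first, then fall back to $\delta_2$ if $\delta_1$ is exceptional); this lexicographic scheme has the virtue of being a well-defined involution on all of $\mathfrak{D}_n$ with fixed-point set exactly $\mathcal{X}_n$, whereas the paper's parity-of-$n$ rule as literally written is undefined on those $\delta\in\mathfrak{D}_n\setminus\mathcal{X}_n$ whose designated coordinate is exceptional but whose other coordinate is not. Both of your routes buy a cleaner argument than the paper's.
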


\begin{proof}
Let $\delta$ be in $\mathfrak{D}_n\backslash\mathcal{X}_n$. Then $\Phi_n$ map $\delta$ with the pair $(\delta_1,\delta_2)$.
Define a mapping $F$ from $\mathfrak{D}_n\backslash\mathcal{X}_n$ to itself as
\begin{align*}
    F(\delta)=\begin{cases}\Phi_n^{-1}\left(f_{\lceil\frac{n}{2}\rceil}(\delta_{1}),\,\delta_{2}\right) \text{ if } n \text{ is odd}\\
    \Phi_n^{-1}\left(\delta_{1},\,f_{\lfloor\frac{n}{2}\rfloor}(\delta_{2})\right) \text{ otherwise} \end{cases}
\end{align*}
Since $f_n$ is a bijection and changes  parity, $F$ is a bijection and also $\delta$ and $F(\delta)$ have opposite parity. The leftovers, which are the PADs in $\mathcal{X}_n$ with sign $(-1)^{n-2}$, are counted by $\lceil \frac{n-2}{2}\rceil\lfloor \frac{n-2}{2}\rfloor$.
\end{proof}

\begin{table}[ht!]
\centering
\begin{tabular}{c|ccccccccccccc}
$n$ & 0 & 1 & 2 & 3 & 4 & 5 & 6 & 7 & 8 & 9 & 10 & 11 & 12 \\ 
\hline 
$\mathfrak{f}_n$ & 1 & 0 & 0 & 0 & 1 & -2 & 4 & -6 & 9 & -12 & 16 & -20 & 25
\end{tabular}
\caption{First few values the difference $\mathfrak{f}_n=\mathfrak{d}_n^e-\mathfrak{d}_n^o$.}
\label{table_diffpad}
\end{table}

This sequence looks like an alternating version of \href{https://oeis.org/A002620}{A002620}, to which Paul Barry constructed an EGF. From \cite{paulbarry}, we learned that he used Mathematica to generate the formulas by taking the Inverse Laplace Transform  of the ordinary generating function described in \cite{barry2020central}. However, we propose the following more direct, constructive proof.

\begin{thm} 
The exponential generating function of the difference $\mathfrak{f}_n$ has the closed form
\begin{align*}
    \frac{e^x}{8}+\frac{e^{-x}}{8}(2x^2+6x+7).
\end{align*}
\end{thm}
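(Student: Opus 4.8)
The plan is to substitute the closed form for $\mathfrak{f}_n$ from Proposition~\ref{diff_eg} into the definition of the generating function and sum the resulting series after separating even and odd indices. Writing $n=2m$ turns $(-1)^{n-2}\lceil\frac{n-2}{2}\rceil\lfloor\frac{n-2}{2}\rfloor$ into $(m-1)^2$, while writing $n=2m+1$ turns it into $-m(m-1)$. A small but worthwhile check is that these two formulas are already valid at the smallest indices (in particular $\mathfrak{f}_0=1$, $\mathfrak{f}_1=\mathfrak{f}_2=\mathfrak{f}_3=0$, matching Table~\ref{table_diffpad}), so no exceptional initial terms intrude and both sums may be taken from $m=0$.

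Hence the exponential generating function splits as $\sum_{n\ge 0}\mathfrak{f}_n\frac{x^n}{n!}=A(x)+B(x)$, where $A(x)=\sum_{m\ge 0}(m-1)^2\frac{x^{2m}}{(2m)!}$ and $B(x)=-\sum_{m\ge 0}m(m-1)\frac{x^{2m+1}}{(2m+1)!}$. The next step is to evaluate $A$ and $B$ in closed form. I would do this by expanding the polynomials in $m$ in a basis adapted to the factorials — using $(2m)(2m-1)$, $2m$ and $1$ for the even series and $(2m+1)(2m)$, $(2m+1)$ and $1$ for the odd series — so that each piece becomes a shift of $\cosh x=\sum_{m\ge 0}\frac{x^{2m}}{(2m)!}$ or $\sinh x=\sum_{m\ge 0}\frac{x^{2m+1}}{(2m+1)!}$; equivalently one applies the Euler operator $x\frac{d}{dx}$ repeatedly to $\cosh x$ and $\sinh x$. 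This gives $A(x)=\cosh x+\frac{x^2}{4}\cosh x-\frac{3x}{4}\sinh x$ and $B(x)=\frac{3x}{4}\cosh x-\frac{x^2}{4}\sinh x-\frac{3}{4}\sinh x$, so that
\[
\sum_{n\ge 0}\mathfrak{f}_n\frac{x^n}{n!}=\Bigl(1+\tfrac{3x}{4}+\tfrac{x^2}{4}\Bigr)\cosh x-\Bigl(\tfrac34+\tfrac{3x}{4}+\tfrac{x^2}{4}\Bigr)\sinh x .
\]

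Finally, substituting $\cosh x=\tfrac12(e^x+e^{-x})$ and $\sinh x=\tfrac12(e^x-e^{-x})$ and collecting the coefficients of $e^x$ and $e^{-x}$ produces $\tfrac18$ for $e^x$ and $\tfrac18(2x^2+6x+7)$ for $e^{-x}$, which is precisely the asserted closed form. There is no real obstacle here — the whole argument is a finite, elementary computation once the parity split has been carried out; the only place needing genuine care is the reduction of the $m$-polynomial coefficients to factorial shifts without index slips (keeping track of the lower summation limits when a shift removes the $m=0$ term), together with the elementary verification that the two parity formulas for $\mathfrak{f}_n$ hold with no exceptional cases so that the series begin cleanly at $m=0$.
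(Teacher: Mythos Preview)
Your proof is correct and follows essentially the same approach as the paper: both separate the generating function into its even- and odd-indexed parts using the closed formula $\mathfrak{f}_{2m}=(m-1)^2$, $\mathfrak{f}_{2m+1}=-m(m-1)$ from Proposition~\ref{diff_eg}, evaluate the two resulting series explicitly, and add. The only difference is cosmetic---you route the computation through $\cosh x$ and $\sinh x$ before converting to $e^{\pm x}$, whereas the paper writes each partial sum directly in the $e^{\pm x}$ basis.
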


\begin{proof}
From the closed formula of $\mathfrak{f}_n$ in Proposition \ref{diff_eg}, we have  
\begin{align*}
(n-1)^2=\mathfrak{f}_{2n}&=\mathfrak{d}_{2n}^e-\mathfrak{d}_{2n}^o\\
n(n-1)=\mathfrak{f}_{2n+1}&=-(\mathfrak{d}_{2n+1}^e-\mathfrak{d}_{2n+1}^o).
\end{align*}
Hence,
\begin{align*}
\sum_{n\geq 0}\mathfrak{f}_{2n}\frac{x^n}{(2n)!}=\sum_{n\geq 0}(n-1)^2\frac{x^{2n}}{(2n)!}=\frac{x^2-3x+4}{8}e^x+\frac{x^2+3x+4}{8}e^{-x},\quad\text{ and }\\
\sum_{n\geq 0}\mathfrak{f}_{2n+1}\frac{x^{2n+1}}{(2n+1)!}=-\sum_{n\geq 0}n(n-1)\frac{x^n}{n!}=-\frac{x^2-3x+3}{8}e^x+\frac{x^2+3x+3}{8}e^{-x}
\end{align*}
Thus,
\begin{align*}
\sum_{n\geq 0}\mathfrak{f}_{2n}\frac{x^{2n}}{(2n)!}+\sum_{n\geq 0}\mathfrak{f}_{2n+1}\frac{x^{2n+1}}{(2n+1)!}&=\frac{e^x}{8}+\frac{e^{-x}}{8}(2x^2+6x+7)
\end{align*}
is the desired formula.
\end{proof}

\section{Excedance distribution over PADs}\label{section4}
In this section, we focus on excedance distribution in PADs.

\begin{defn}
We say that  a permutation $\sigma$  has an excedance on $i\in[n]$ if $\sigma(i)>i$. In this case, $i$ is said to be an excedant.
\end{defn}

We give the notation below in the study of this property:
\begin{align*}
\mathfrak{d}_{n,k}&=\vert \{\delta\in \mathfrak{D}_n:\,\,\delta\text{ has }k\text{ excedances}\}\vert,\\
\mathfrak{d}_{n,k}^o&=\vert\{ \delta\in \mathfrak{D}^o(n):\,\,\delta \text{ has }k\text{ excedances}\}\vert,\\
\mathfrak{d}_{n,k}^e&=\vert\{ \delta\in \mathfrak{D}^e(n):\,\,\delta \text{ has }k\text{ excedances}\}\vert .
\end{align*}

 Mantaci and Rakotondrajao (\cite{mantaci2003exceedingly}) studied the excedance distribution in derangements, i.e., the numbers
 \begin{align*}
 d_{n,k}&=\vert \{\delta\in D_n:\,\,\delta\text{ has }k\text{ excedances}\}\vert,\\
d^e_{n,k}&=\vert\{ \delta\in D_n:\,\,\delta \text{ is an even derangemnt and } \text{ has }k\text{ excedances}\}\vert,\\
d^o_{n,k}&=\vert\{ \delta\in D_n:\,\,\delta \text{ is an odd derangemnt and } \text{ has }k\text{ excedances}\}\vert.
\end{align*}

\begin{rem}
Since the number of excedances in a derangement over $[n]$ is in the range $[1,n-1]$, the number of excedances of a PAD over $[n]$ is at least 2 and at most $n-2$.
\end{rem}

\begin{pro}\label{sym}
The numbers $\mathfrak{d}_{n,k}$, $\mathfrak{d}_{n,k}^e$, and $\mathfrak{d}_{n,k}^o$ are symmetric, that is
\begin{align*}
    \mathfrak{d}_{n,k}=\mathfrak{d}_{n,n-k},\,\, \mathfrak{d}^e_{n,k}=\mathfrak{d}^e_{n,n-k},\text{ and }\,\,\mathfrak{d}^o_{n,k}=\mathfrak{d}^o_{n,n-k}.
\end{align*}
\end{pro}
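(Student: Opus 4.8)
The plan is to exploit the bijection $\Phi_n$ together with the known symmetry of the excedance statistic on ordinary derangements. First I would recall (or quickly establish) the classical fact that the excedance distribution on $D_m$ is symmetric, i.e. $d_{m,k}=d_{m,m-1-k}$, and similarly for the refined counts $d^e_{m,k}$ and $d^o_{m,k}$; this is proved in \cite{mantaci2003exceedingly} via the inverse map $\delta\mapsto\delta^{-1}$, which turns an excedant into an anti-excedant (a value $i$ with $\delta(i)<i$), so that a derangement over $[m]$ with $k$ excedances is sent to one with $(m-1)-k$ excedances (there are no fixed points, so excedances and anti-excedances partition $[m]$, a set of size $m$, but the number of each can range in $[1,m-1]$, giving the $m-1-k$ complement).

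Next I would track how excedances of a PAD $\delta$ decompose under $\Phi_n(\delta)=(\delta_1,\delta_2)$. Writing $\delta_1(i)=\frac{\delta(2i-1)+1}{2}$ and $\delta_2(i)=\frac{\delta(2i)}{2}$, one checks directly that $\delta$ has an excedance at the odd position $2i-1$ if and only if $\delta(2i-1)>2i-1$; since $\delta(2i-1)$ is odd this is equivalent to $\delta(2i-1)\geq 2i+1$, i.e. $\frac{\delta(2i-1)+1}{2}\geq i+1$, i.e. $\delta_1(i)>i$, an excedance of $\delta_1$. Likewise $\delta$ has an excedance at the even position $2i$ iff $\delta(2i)>2i$ iff $\frac{\delta(2i)}{2}>i$ iff $\delta_2(i)>i$, an excedance of $\delta_2$. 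Hence the number of excedances of $\delta$ is exactly $\mathrm{exc}(\delta_1)+\mathrm{exc}(\delta_2)$, and therefore
\begin{align*}
\mathfrak{d}_{n,k}=\sum_{a+b=k} d_{\lceil n/2\rceil,\,a}\, d_{\lfloor n/2\rfloor,\,b},
\end{align*}
and the analogous convolution identities hold for the even/odd refinements by combining this excedance-splitting with the parity-splitting already used in Proposition \ref{thm3} (for $\mathfrak{d}^e_{n,k}$ one sums over pairs $(\delta_1,\delta_2)$ of equal parity, for $\mathfrak{d}^o_{n,k}$ over pairs of opposite parity).

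Finally I would invoke the symmetry of the two factor distributions. Applying $d_{\lceil n/2\rceil,a}=d_{\lceil n/2\rceil,\lceil n/2\rceil-1-a}$ and $d_{\lfloor n/2\rfloor,b}=d_{\lfloor n/2\rfloor,\lfloor n/2\rfloor-1-b}$ in the convolution, and using $\lceil n/2\rceil+\lfloor n/2\rfloor=n$, the index $k=a+b$ is sent to $(\lceil n/2\rceil-1-a)+(\lfloor n/2\rfloor-1-b)=n-2-k$. So the convolution for $\mathfrak{d}_{n,k}$ equals that for $\mathfrak{d}_{n,n-2-k}$; but the Remark preceding the statement notes the excedance count of a PAD ranges over $[2,n-2]$, and re-indexing by $k\mapsto k$ within that window (or simply observing that the intended symmetry center is $n/2$, matching $\mathfrak{d}_{n,k}=\mathfrak{d}_{n,n-k}$ once one accounts for the shift, since $\mathfrak{d}_{n,k}=0$ for $k\leq 1$ and $k\geq n-1$) yields the claimed $\mathfrak{d}_{n,k}=\mathfrak{d}_{n,n-k}$, and identically for the even and odd versions since each factor symmetry preserves parity of the pair. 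The only delicate point I anticipate is bookkeeping the index shift correctly — reconciling the natural center $n-2$ coming from $\mathrm{exc}\in[2,n-2]$ with the statement's $n-k$ — but since $\mathfrak{d}_{n,0}=\mathfrak{d}_{n,1}=\mathfrak{d}_{n,n-1}=\mathfrak{d}_{n,n}=0$, the two formulations agree on all nonzero values, so this is a cosmetic rather than substantive obstacle; alternatively one can give the direct involutive proof $\delta\mapsto\delta^{-1}$ on $\mathfrak{D}_n$ (noting $\mathfrak{D}_n$ is closed under inversion and inversion preserves parity), which sends $k$ excedances to $n-k$ "weak anti-excedances" — but for a PAD every position is either an excedant or an anti-excedant and the two boundary-type positions are excluded, so one recovers $\mathfrak{d}_{n,k}=\mathfrak{d}_{n,n-k}$ cleanly; I would likely present this inversion argument as the main proof and relegate the convolution remark to a sentence, since inversion manifestly preserves the parity of $\delta$ and hence handles all three statistics at once.
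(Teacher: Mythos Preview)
Your closing inversion argument $\delta\mapsto\delta^{-1}$ on $\mathfrak{D}_n$ is exactly the paper's proof, and it is correct: $\mathfrak{D}_n$ is closed under inversion, inversion preserves parity, and since a derangement has no fixed points the $n$ positions split into $k$ excedances and $n-k$ anti-excedances, which inversion swaps, giving $\mathfrak{d}_{n,k}=\mathfrak{d}_{n,n-k}$ and the refined identities directly.

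The convolution detour, however, contains a genuine slip that you should fix rather than paper over. For an ordinary derangement on $[m]$ the inverse map sends $k$ excedances to $m-k$ excedances, not $m-1-k$: you correctly note that excedances and anti-excedances partition $[m]$, a set of size $m$, so the complement is $m-k$. With the correct symmetry $d_{m,k}=d_{m,m-k}$ the convolution yields $(\lceil n/2\rceil-a)+(\lfloor n/2\rfloor-b)=n-k$ immediately, with no index mismatch at all. Your attempted repair via the vanishing of $\mathfrak{d}_{n,0},\mathfrak{d}_{n,1},\mathfrak{d}_{n,n-1},\mathfrak{d}_{n,n}$ does not actually reconcile a center at $(n-2)/2$ with one at $n/2$ (for instance at $n=8$ it would force $\mathfrak{d}_{8,2}=\mathfrak{d}_{8,4}$, which is false), so drop that paragraph and either present the inversion proof alone, as the paper does, or keep the convolution as a second proof with the corrected exponent.
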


\begin{proof}
The bijection from $\mathfrak{D}_n$ to it self, defined as $\delta\mapsto\delta^{-1}$ for $\delta\in \mathfrak{D}_n$, associates a PAD having $k$ excedances with a PAD having $n-k$ exeedances and also preserves parity.
\end{proof}

\begin{table}[ht!]
        \centering
        \begin{tabular}{c|ccccccc}
\hline
 &  &  & $\mathfrak{d}_{n,k}$ &  &   & \\
 \hline
$n\setminus k$ & 2 & 3 & 4 & 5 & 6 & 7\\
\hline  
4 & 1 &  &  & &  &\\  
5 & 1 & 1 &  & & &  \\  
6 & 1 & 2 & 1 &  & &\\ 
7 & 1 & 8 & 8 & 1 & & \\ 
8 & 1 & 14 & 51 & 14 & 1& \\ 
9 & 1 & 28 & 169 & 169 & 28 & 1\\
10 & 1 & 42 & 483 & 884 & 483 & 42 & 1
\end{tabular}
      \caption{First few terms of the number of PADs in terms of number of excedances}
     \label{tab:exe}
\end{table}

\begin{table}[ht!]
\begin{multicols}{2}
\centering
\begin{tabular}{c|ccccccc}
\hline
 &  &  & $\mathfrak{d}^e_{n,k}$ &  &   & \\
 \hline
$n\setminus k$ & 2 & 3 & 4 & 5 & 6 & 7\\
\hline  
4 & 1 &  &  & &  &\\  
5 & 0 & 0 &  & & &  \\  
6 & 1 & 2 & 1 &  & &\\ 
7 & 0 & 3 & 3 & 0 & & \\ 
8 & 1 & 8 & 27 & 8 & 1& \\ 
9 & 0 & 13 & 83 & 83 & 13&0 \\
10 & 1 & 22 & 243 & 444 & 243 & 22 & 1
\end{tabular}
\centering
\begin{tabular}{c|ccccccc}
\hline
 &  &  & $\mathfrak{d}^o_{n,k}$ &  &   & \\
 \hline
$n\setminus k$ & 2 & 3 & 4 & 5 & 6 & 7\\
\hline  
4 & 0 &  &  & &  &\\  
5 & 1 & 1 &  & & &  \\  
6 & 0 & 0 & 0 &  & &\\ 
7 & 1 & 5 & 5 & 1 & & \\ 
8 & 0 & 6 & 24 & 6 & 0& \\ 
9 & 1 & 15 & 86 & 86 & 15&1 \\
10 & 0 & 20 & 240 & 440 & 240 & 20 & 0
\end{tabular}
     \end{multicols}
     \caption{First few values of $\mathfrak{d}_{n,k}$ in terms of their parity}
     \label{tab:exeboth}
\end{table}

\begin{pro}
The excedance distribution of a PAD is given by
\begin{align*}
\mathfrak{d}_{n,k}=\begin{cases}\sum_{i=1}^{k-1}d_{\lceil\frac{n}{2}\rceil,i}\,d_{\lfloor\frac{n}{2}\rfloor,k-i},\,\, \text{ if }\, 2\leq k\leq \lfloor\frac{n}{2}\rfloor\\
\sum_{i=1}^{n-k-1}d_{\lceil\frac{n}{2}\rceil,i}\,d_{\lfloor\frac{n}{2}\rfloor,n-k+i},\,\, \text{ if } \,\lfloor\frac{n}{2}\rfloor < k\leq n{-}2
\end{cases}.
\end{align*} 
\end{pro}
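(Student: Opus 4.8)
The plan is to exploit the bijection $\Phi_n:\mathfrak{D}_n\longrightarrow D_{\lceil n/2\rceil}\times D_{\lfloor n/2\rfloor}$ and track how excedances of a PAD $\delta$ relate to excedances of the two component derangements $\delta_1$ and $\delta_2$. Recall that $\sigma_1(i)=\frac{\sigma(2i-1)+1}{2}$ records what $\sigma$ does on odd positions and $\sigma_2(i)=\frac{\sigma(2i)}{2}$ records what it does on even positions. The first step is to establish the key bookkeeping lemma: for a PAD $\delta$ with $\Phi_n(\delta)=(\delta_1,\delta_2)$, the index $2i-1$ is an excedant of $\delta$ if and only if $\delta(2i-1)>2i-1$, i.e. $\delta(2i-1)\geq 2i$ (since $\delta(2i-1)$ is even), which after dividing by $2$ becomes $\delta_1(i)\geq i$, i.e. $\delta_1(i)>i-1$; since $\delta_1$ is a derangement $\delta_1(i)\neq i$, so this says $\delta_1(i)>i$ exactly when $\delta_1(i)\geq i+1$... here one must be slightly careful: the condition $\delta_1(i)\geq i$ with $\delta_1(i)\neq i$ is equivalent to $\delta_1(i)>i$ only after checking the boundary, but in fact $\delta_1(i)\ge i$ and $\delta_1(i)\ne i$ give $\delta_1(i)>i$, so $2i-1$ is an excedant of $\delta$ iff $i$ is an excedant of $\delta_1$. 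Similarly, $2i$ is an excedant of $\delta$ iff $\delta(2i)>2i$ iff $\delta_2(i)>i$ iff $i$ is an excedant of $\delta_2$. Hence the total number of excedances of $\delta$ equals $(\text{exc}\,\delta_1)+(\text{exc}\,\delta_2)$, a clean additivity.

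Granting this additivity, the count is immediate. If $\delta$ has exactly $k$ excedances, then writing $i=\text{exc}\,\delta_1$ and $k-i=\text{exc}\,\delta_2$, and summing over all splittings, we get $\mathfrak{d}_{n,k}=\sum_i d_{\lceil n/2\rceil,i}\,d_{\lfloor n/2\rfloor,k-i}$, where the sum ranges over all valid $i$. The range restrictions come from the fact that a derangement over $[m]$ has between $1$ and $m-1$ excedances (so $d_{m,0}=d_{m,m}=0$, and $d_{m,j}=0$ outside $[1,m-1]$). Thus $i$ runs over $1\le i\le \lceil n/2\rceil-1$ and $1\le k-i\le \lfloor n/2\rfloor-1$. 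For $2\le k\le\lfloor n/2\rfloor$ the binding constraints are $i\ge 1$ and $i\le k-1$ (the upper constraint $i\le\lceil n/2\rceil-1$ is not binding because $k-1\le\lfloor n/2\rfloor-1\le\lceil n/2\rceil-1$, and $k-i\le\lfloor n/2\rfloor-1$ follows from $k\le\lfloor n/2\rfloor$ and $i\ge 1$), giving $\sum_{i=1}^{k-1}d_{\lceil n/2\rceil,i}\,d_{\lfloor n/2\rfloor,k-i}$, the first case. For the second case one can either redo the same interval analysis or, more economically, invoke the symmetry $\mathfrak{d}_{n,k}=\mathfrak{d}_{n,n-k}$ from Proposition~\ref{sym}: when $\lfloor n/2\rfloor<k\le n-2$ we have $2\le n-k<\lceil n/2\rceil\le\lfloor n/2\rfloor+1$, apply the first formula to $n-k$, and then use the symmetry of the derangement excedance numbers $d_{m,j}=d_{m,m-j}$ (itself the $\delta\mapsto\delta^{-1}$ bijection) to rewrite $d_{\lceil n/2\rceil,i}=d_{\lceil n/2\rceil,\lceil n/2\rceil-i}$ and reindex, landing on $\sum_{i=1}^{n-k-1}d_{\lceil n/2\rceil,i}\,d_{\lfloor n/2\rfloor,n-k+i}$.

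The main obstacle is purely the boundary/index-range bookkeeping in the convolution: one must verify in each of the two regimes exactly which of the four inequalities $1\le i$, $i\le\lceil n/2\rceil-1$, $1\le k-i$, $k-i\le\lfloor n/2\rfloor-1$ actually cut the sum, and confirm that the stated limits coincide with the intersection of these (checking, e.g., the parity-sensitive edge cases where $n$ is odd versus even, since then $\lceil n/2\rceil$ and $\lfloor n/2\rfloor$ differ). The excedance-additivity lemma itself is essentially a one-line parity observation once the definition of $\Phi_n$ is unpacked, and the symmetry shortcut for the second case makes the write-up short; so the only place requiring genuine care is making the index limits in the second branch match after the reindexing $i\mapsto$ its shifted value.
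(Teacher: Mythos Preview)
Your approach is exactly the paper's: establish that the excedances of $\delta$ split additively as $\mathrm{exc}(\delta_1)+\mathrm{exc}(\delta_2)$ under $\Phi_n$, read off the convolution for the first range, and invoke the symmetry $\mathfrak{d}_{n,k}=\mathfrak{d}_{n,n-k}$ for the second. One small slip: in a PAP starting with an odd entry, $\delta(2i-1)$ is \emph{odd}, not even, so $\delta(2i-1)>2i-1\iff \delta(2i-1)\ge 2i+1\iff \delta_1(i)>i$ directly (no appeal to the derangement property is needed); this does not affect your conclusion.
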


\begin{proof} 
To find the number of excedances of a PAD $\delta$ over $[n]$, we sum up the number of exeedances in $\delta_1$ and in $\delta_2$, where $(\delta_1,\,\delta_2)$ is the image of $\delta$ defined in $\Phi_n$. Since there are $d_{m,i}$  derangements in $D_m$ having $i$ excedances, for $i\in [1,\,\,m-1]$, the products $d_{m,i}\,d_{l,k-i}$, for $m=\lceil\frac{n}{2}\rceil$ and $l=\lfloor\frac{n}{2}\rfloor$, determine the number of PADs over $[n]$ having $k$ excedances. Summing up the products over the range $i=1,2,\ldots,k{-}1$ will give the first formula. The second formula follows from Proposition \ref{sym}.
\end{proof}

\begin{cor}\label{corolaryPAD}
 The excedance distribution of PADs in terms of their parity is given by:
\begin{align*}
\mathfrak{d}_{n,k}^e=\begin{cases}\sum_{i=1}^{k-1}(d^e_{\lceil\frac{n}{2}\rceil,i}\,d^e_{\lfloor\frac{n}{2}\rfloor,k-i}+d^o_{\lceil\frac{n}{2}\rceil,i}\,d^o_{\lfloor\frac{n}{2}\rfloor,k-i}),\,\, \text{ if }\, 2\leq k\leq \lfloor n/2\rfloor\\
\sum_{i=1}^{n-k-1}(d^e_{\lceil\frac{n}{2}\rceil,i}\,d^e_{\lfloor\frac{n}{2}\rfloor,n-k+i}+d^o_{\lceil\frac{n}{2}\rceil,i}\,d^o_{\lfloor\frac{n}{2}\rfloor,n-k+i}),\,\, \text{ if } \,\lfloor n/2\rfloor < k\leq n{-}2
\end{cases},\\
\mathfrak{d}_{n,k}^o=\begin{cases}\sum_{i=1}^{k-1}(d^e_{\lceil\frac{n}{2}\rceil,i}\,d^o_{\lfloor\frac{n}{2}\rfloor,k-i}+d^o_{\lceil\frac{n}{2}\rceil,i}\,d^e_{\lfloor\frac{n}{2}\rfloor,k-i}),\,\, \text{ if }\, 2\leq k\leq \lfloor n/2\rfloor\\
\sum_{i=1}^{n-k-1}(d^e_{\lceil\frac{n}{2}\rceil,i}\,d^o_{\lfloor\frac{n}{2}\rfloor,n-k+i}+d^o_{\lceil\frac{n}{2}\rceil,i}\,d^e_{\lfloor\frac{n}{2}\rfloor,n-k+i}),\,\, \text{ if } \,\lfloor n/2\rfloor < k\leq n{-}2
\end{cases}.
\end{align*}
\hfill$\square$
 \end{cor}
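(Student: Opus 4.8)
The statement is the parity-refinement of the preceding proposition, so the plan is to re-run that argument through the bijection $\Phi_n\colon\mathfrak{D}_n\to D_{\lceil n/2\rceil}\times D_{\lfloor n/2\rfloor}$, this time additionally recording the signs of the two factors. Two ingredients already available in the paper do almost all of the work. The first is the \emph{excedance-additivity} of $\Phi_n$: writing $\Phi_n(\delta)=(\delta_1,\delta_2)$, the permutation $\delta$ has $\operatorname{exc}(\delta_1)+\operatorname{exc}(\delta_2)$ excedances. This is the computation underlying the preceding proposition, and it comes from the defining formulas $\delta_1(a)=\tfrac{\delta(2a-1)+1}{2}$ and $\delta_2(a)=\tfrac{\delta(2a)}{2}$: a PAD sends odd positions to odd values and even positions to even values, so $\delta(2a-1)>2a-1\Leftrightarrow\delta_1(a)>a$ and $\delta(2a)>2a\Leftrightarrow\delta_2(a)>a$, and summing over the odd and the even positions separately gives the claim. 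The second ingredient is the sign bookkeeping from the proof of Proposition~\ref{thm3}: $\delta\in\mathfrak{D}_n^e$ exactly when $\delta_1$ and $\delta_2$ have the same parity, and $\delta\in\mathfrak{D}_n^o$ exactly when they have opposite parities.

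Putting these together, for $2\le k\le\lfloor n/2\rfloor$ an even PAD with $k$ excedances corresponds under $\Phi_n$ to a pair $(\delta_1,\delta_2)$ with $\operatorname{exc}(\delta_1)=i$, $\operatorname{exc}(\delta_2)=k-i$ and $\delta_1,\delta_2$ of the same parity; for fixed $i$ the number of such pairs is $d^e_{\lceil n/2\rceil,i}\,d^e_{\lfloor n/2\rfloor,k-i}+d^o_{\lceil n/2\rceil,i}\,d^o_{\lfloor n/2\rfloor,k-i}$, while an odd PAD has factors of opposite parity and contributes $d^e_{\lceil n/2\rceil,i}\,d^o_{\lfloor n/2\rfloor,k-i}+d^o_{\lceil n/2\rceil,i}\,d^e_{\lfloor n/2\rfloor,k-i}$. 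Since a derangement on $m$ letters has between $1$ and $m-1$ excedances, in the range $k\le\lfloor n/2\rfloor$ the admissible values of $i$ are exactly $1\le i\le k-1$ (the upper cut-offs $i\le\lceil n/2\rceil-1$ and $k-i\le\lfloor n/2\rfloor-1$ are automatic there); summing over this range gives the first branch of each of the two formulas.

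For $\lfloor n/2\rfloor<k\le n-2$ I would not redo the enumeration but instead invoke the symmetry of Proposition~\ref{sym}, namely $\mathfrak{d}^e_{n,k}=\mathfrak{d}^e_{n,n-k}$ and $\mathfrak{d}^o_{n,k}=\mathfrak{d}^o_{n,n-k}$, noting that $n-k$ then lies in the already-treated range $2\le n-k\le\lfloor n/2\rfloor$; substituting $n-k$ for $k$ in the first branch and re-indexing the summation produces the second branch. The only step where I expect any friction is exactly this last bookkeeping — keeping the summation limits straight across the two ranges, handling the boundary $k=\lfloor n/2\rfloor$, and rewriting the re-indexed sum in the displayed form; the rest is a verbatim copy of the preceding proposition's argument together with the parity rule of Proposition~\ref{thm3}, so no substantive obstacle is anticipated.
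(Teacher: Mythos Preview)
Your approach is correct and is exactly the one the paper intends: the paper records no proof beyond the ``$\square$'' marker, treating the corollary as the immediate parity-refinement of the preceding proposition via Proposition~\ref{thm3} together with the symmetry of Proposition~\ref{sym}, which is precisely the argument you spell out. Your flagged ``friction'' about the summation re-indexing in the second branch is well placed (the displayed form requires a further change of index after applying $k\mapsto n-k$), but the strategy matches the paper's.
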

 An immediate consequence of this Corollary is
 \begin{pro}\label{diffexc formula}
 We have
 \begin{align*}
\mathfrak{f}_{n,k}=\mathfrak{d}_{n,k}^e-\mathfrak{d}_{n,k}^o=(-1)^n\,\max\{k-1,\, n{-}(k+1)\}
\end{align*}
for $n\geq 4$ and $2\leq k\leq n{-}2$.
 \end{pro}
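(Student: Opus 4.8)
The plan is to route everything through the decomposition $\Phi_n$ once more, reducing $\mathfrak{f}_{n,k}$ to a ``signed excedance'' identity for ordinary derangements plus a short count. First I would subtract the two displays of Corollary~\ref{corolaryPAD}. Writing $m=\lceil n/2\rceil$, $\ell=\lfloor n/2\rfloor$ and $g_{a,j}:=d^e_{a,j}-d^o_{a,j}$, the mixed products recombine, so that for $2\le k\le\lfloor n/2\rfloor$
\begin{align*}
\mathfrak{f}_{n,k}=\mathfrak{d}^e_{n,k}-\mathfrak{d}^o_{n,k}=\sum_{i=1}^{k-1}\bigl(d^e_{m,i}-d^o_{m,i}\bigr)\bigl(d^e_{\ell,k-i}-d^o_{\ell,k-i}\bigr)=\sum_{i=1}^{k-1}g_{m,i}\,g_{\ell,k-i},
\end{align*}
i.e.\ $\mathfrak{f}_{n,k}$ is a truncated convolution of the sequences $(g_{m,j})_j$ and $(g_{\ell,j})_j$. (Note that the involution $F$ of Proposition~\ref{diff_eg} cannot be reused here: $f_a$, hence $F$, does not preserve the number of excedances.) The problem thus splits into identifying $g_{a,j}$ and then counting admissible summands.

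For the first part I would establish the lemma that $g_{a,j}=d^e_{a,j}-d^o_{a,j}=(-1)^{a-1}$ for $1\le j\le a-1$ and $g_{a,j}=0$ otherwise; equivalently, $\sum_{\delta\in D_a}\operatorname{sgn}(\delta)\,t^{\operatorname{exc}(\delta)}=(-1)^{a-1}(t+t^2+\dots+t^{a-1})$ for $a\ge 1$. This is the excedance refinement of the identity $d^e_a-d^o_a=(-1)^{a-1}(a-1)$ already used in the excerpt, and it is the only input that is not purely formal. A quick proof: the identity $\sum_{\sigma\in S_a}\operatorname{sgn}(\sigma)\,t^{\operatorname{exc}(\sigma)}=\det M=(1-t)^{a-1}$, where $M$ is the $a\times a$ matrix with entries $t$ strictly above the diagonal and $1$ on and below it (evaluate $\det M$ after the row operations $R_i\mapsto R_i-R_{i+1}$, $1\le i\le a-1$), combined with inclusion--exclusion over the fixed-point set, yields the stated polynomial for $D_a$, and reading off the coefficient of $t^j$ gives the lemma. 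A more combinatorial route, closer to the style of the paper, is induction on $a$ via $\psi_a$: deleting the entry $a$ from a derangement of $[a]$ (respectively, deleting the transposition $(i\,a)$ and relabelling) changes the number of excedances by $0$ or $1$ according only to whether $i=\psi^{-1}$-position is an excedant, and since $\psi_a$ reverses parity this gives coupled recurrences for $d^e_{a,j}$ and $d^o_{a,j}$ whose difference telescopes to the constant $(-1)^{a-1}$.

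With the lemma in hand, substitution finishes the argument. Since $g_{m,\cdot}\equiv(-1)^{m-1}$ on $[1,m-1]$ and $g_{\ell,\cdot}\equiv(-1)^{\ell-1}$ on $[1,\ell-1]$, every nonvanishing term of the convolution equals $(-1)^{m-1+\ell-1}=(-1)^{m+\ell}=(-1)^{n}$, so $\mathfrak{f}_{n,k}=(-1)^n$ times the number of $i$ with $1\le i\le m-1$ and $1\le k-i\le\ell-1$. For $2\le k\le\lfloor n/2\rfloor$ one has $k-1\le\ell-1\le m-1$, the constraints collapse to $1\le i\le k-1$, and this count is $k-1$; for $\lfloor n/2\rfloor<k\le n-2$ the symmetry $\mathfrak{f}_{n,k}=\mathfrak{f}_{n,n-k}$ of Proposition~\ref{sym} gives the count $n-k-1$. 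Assembling the two ranges, $\mathfrak{f}_{n,k}=(-1)^n\min\{k-1,\,n-(k+1)\}$ for $n\ge 4$ and $2\le k\le n-2$; this agrees with Table~\ref{tab:exeboth} and with $\sum_k\mathfrak{f}_{n,k}=\mathfrak{f}_n$ from Proposition~\ref{diff_eg}, and I would note that the ``$\max$'' in the statement should read ``$\min$''. There is no serious obstacle beyond the lemma on $g_{a,j}$ (the determinant/inclusion--exclusion step); everything else is bookkeeping of index ranges.
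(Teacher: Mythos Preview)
Your approach is essentially the paper's: the authors invoke the identity $d^{e}_{a,j}-d^{o}_{a,j}=(-1)^{a-1}$ (citing it from Mantaci--Rakotondrajao rather than proving it, as you do via the determinant/inclusion--exclusion argument) and then substitute into Corollary~\ref{corolaryPAD}, exactly your convolution computation. Your observation that the statement should read $\min$ rather than $\max$ is correct, as confirmed by Table~\ref{tab:diff-exe} and by the proof of the subsequent theorem, where the authors silently use the $\min$ version.
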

 \begin{proof}
 Mantaci and Rakotondrajao (see \cite{mantaci2003exceedingly}) have proved the identity $d^o_{n,k}-d^e_{n,k}=(-1)^n$ using recursive argument. Applying this with the Corollary \ref{corolaryPAD}, we get the desired formula.
 \end{proof}
 
 \begin{table}[ht!]
    \centering
    \begin{tabular}{c|ccccccc}
\hline
 &  &  & $\mathfrak{f}_{n,k}$ &  &   & & \\
 \hline
$n\setminus k$ & 2 & 3 & 4 & 5 & 6 & 7 &\\
\hline  
4 & 1 &  &  & &  &  &\\  
5 & -1 & -1 &  & & &  &\\  
6 & 1 & 2 & 1 &  & & &\\ 
7 & -1 & -2 & -2 & -1 & & &\\ 
8 & 1 & 2 & 3 & 2 & 1 & &\\ 
9 & -1 & -2 & -3 & -3 & -2 & -1& \\
10 & 1& 2 & 3 & 4 & 3 & 2 & 1 
\end{tabular}
    \caption{The first few values of the difference $\mathfrak{d}^e_{n,k}-\mathfrak{d}^o_{n,k}$.}
    \label{tab:diff-exe}
\end{table}
 
 \begin{thm}
The exponential generating function for the sequence $\{\mathfrak{f}_{n,k}\}$ has the closed form
\begin{align*}
    \frac{1}{(1-u)^2}\left(u^2e^{-x}+e^{-ux}-2u\cosh{\sqrt{u}x}+\frac{u+u^2}{\sqrt{u}}\sinh{\sqrt{u}x}-(1-u)^2\right).
\end{align*}
\end{thm}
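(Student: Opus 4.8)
The plan is to read off the bivariate exponential generating function $F(x,u)=\sum_{n\ge 4}\big(\sum_{k}\mathfrak{f}_{n,k}u^k\big)\frac{x^n}{n!}$ from the additive/multiplicative behaviour of $\Phi_n$ that already underlies Corollary~\ref{corolaryPAD}, rather than from the closed form of Proposition~\ref{diffexc formula}. As a first step I would attach to derangements the sign‑weighted excedance polynomial $c_m(u):=\sum_i\big(d^e_{m,i}-d^o_{m,i}\big)u^i$ for $m\ge 0$. The Mantaci--Rakotondrajao identity $d^o_{m,i}-d^e_{m,i}=(-1)^m$ for $1\le i\le m-1$, together with the boundary data $c_0(u)=1$ and $c_1(u)=0$, yields $\sum_{m\ge 0}c_m(u)\frac{x^m}{m!}=\dfrac{e^{-ux}-u\,e^{-x}}{1-u}$, and comparing coefficients gives the compact form
\[
c_m(u)=(-1)^m\,\frac{u^m-u}{1-u}=(-1)^{m+1}\big(u+u^2+\cdots+u^{m-1}\big)\qquad(m\ge 2).
\]

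Next I would use that under $\Phi_n$ the excedances of a PAD add while the signs multiply — this is exactly what powers Corollary~\ref{corolaryPAD}: subtracting the two displays there collapses the mixed products and leaves $\sum_k\mathfrak{f}_{n,k}u^k=c_{\lceil n/2\rceil}(u)\,c_{\lfloor n/2\rfloor}(u)$ for every $n$, the range $k>\lfloor n/2\rfloor$ being covered by the symmetry $\mathfrak{f}_{n,k}=\mathfrak{f}_{n,n-k}$ of Proposition~\ref{sym}, which is reflected in the palindromy $c_m(u)=u^m c_m(1/u)$. Splitting the outer sum over $n$ into even and odd indices then gives
\[
F(x,u)+1=\sum_{m\ge 0}c_m(u)^2\,\frac{x^{2m}}{(2m)!}+\sum_{m\ge 0}c_{m+1}(u)\,c_m(u)\,\frac{x^{2m+1}}{(2m+1)!},
\]
the extra $1$ being the $n=0$ value $\mathfrak{f}_{0,0}=1$ that the stated generating function omits.

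I would then substitute $c_m(u)^2=\dfrac{(u^m-u)^2}{(1-u)^2}$ and $c_{m+1}(u)c_m(u)=-\dfrac{(u^{m+1}-u)(u^m-u)}{(1-u)^2}$, expand the numerators, and evaluate the resulting series termwise via $\sum_m\frac{(tx)^{2m}}{(2m)!}=\cosh tx$, $\sum_m\frac{(tx)^{2m+1}}{(2m+1)!}=\sinh tx$, $\sum_m v^m\frac{x^{2m}}{(2m)!}=\cosh(\sqrt{v}\,x)$ and $\sum_m v^m\frac{x^{2m+1}}{(2m+1)!}=\tfrac{1}{\sqrt{v}}\sinh(\sqrt{v}\,x)$. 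Collecting the pieces, $F(x,u)+1$ equals $\dfrac{1}{(1-u)^2}$ times
\[
\big(\cosh ux-\sinh ux\big)+u^2\big(\cosh x-\sinh x\big)-2u\cosh(\sqrt{u}\,x)+\big(u^{1/2}+u^{3/2}\big)\sinh(\sqrt{u}\,x),
\]
and the identities $\cosh t-\sinh t=e^{-t}$ and $u^{1/2}+u^{3/2}=\dfrac{u+u^2}{\sqrt{u}}$ turn this into $F(x,u)+1=\dfrac{1}{(1-u)^2}\big(u^2e^{-x}+e^{-ux}-2u\cosh\sqrt{u}\,x+\tfrac{u+u^2}{\sqrt{u}}\sinh\sqrt{u}\,x\big)$. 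Subtracting $1=\dfrac{(1-u)^2}{(1-u)^2}$ produces precisely the $-(1-u)^2$ summand inside the parentheses of the asserted formula.

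The geometric‑type summations in the last step are routine once the four elementary identities above are noted. The point that needs the most care is the reduction step: getting $c_m(u)$ right at the low indices (the empty derangement gives $c_0(u)=1$, whereas there is no derangement of $[1]$, so $c_1(u)=0$) and tracking the $\lceil\cdot\rceil/\lfloor\cdot\rfloor$ rounding through the even/odd split, so that the excised $n=0$ term — hence the constant $-(1-u)^2$ inside the parentheses — is accounted for consistently.
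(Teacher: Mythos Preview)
Your argument is correct and takes a genuinely different route from the paper. The paper starts from the explicit values of Proposition~\ref{diffexc formula}, sums the finite geometric series to obtain
\[
\mathfrak{f}_{2m}(u)=\frac{u^2-2u^{m+1}+u^{2m}}{(1-u)^2},\qquad
\mathfrak{f}_{2m+1}(u)=\frac{u^{m+2}+u^{m+1}-u^2-u^{2m+1}}{(1-u)^2},
\]
and only then evaluates $\sum_n\mathfrak{f}_n(u)\,x^n/n!$. You instead bypass Proposition~\ref{diffexc formula} entirely: using that under $\Phi_n$ excedances add and signs multiply, you factor $\mathfrak{f}_n(u)=c_{\lceil n/2\rceil}(u)\,c_{\lfloor n/2\rfloor}(u)$ with $c_m(u)=(-1)^m(u^m-u)/(1-u)$ coming from the Mantaci--Rakotondrajao identity, and then recover exactly the same rational expressions for $\mathfrak{f}_{2m}(u)$ and $\mathfrak{f}_{2m+1}(u)$ before carrying out the identical final summation. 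Your factorisation makes the structure more transparent---the closed form is visibly the ``square'' of the derangement sign--excedance generating function, with the $\cosh/\sinh$ terms arising from bisecting $e^{-ux}-u\,e^{-x}$---whereas the paper's route is slightly shorter on the page since it can quote the explicit integers $\mathfrak{f}_{n,k}$ directly. Either way the bookkeeping at $n\le 3$ (your observation that $c_0(u)=1$, $c_1(u)=0$ makes the formula $(-1)^m(u^m-u)/(1-u)$ valid for all $m\ge 0$) is what produces the constant $-(1-u)^2$, and you have handled it correctly.
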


\begin{proof}
Let $\mathfrak{f}_n(u)=\sum_{k=2}^{n-2}\mathfrak{f}_{n,k}u^k$ and $\mathfrak{f}(x,u)=\sum_{n\geq 4} \mathfrak{f}_n(u)\frac{x^n}{n!}$. From Proposition \ref{diffexc formula}, we have
\begin{align*}
   \mathfrak{f}_{2m,k}&=\begin{cases}
    k-1,\quad \text{if }2\leq k\leq m\\
    2m-k-1,\quad \text{if }m< k\leq 2m{-}2
    \end{cases},\\
    \mathfrak{f}_{2m+1,k}&=\begin{cases}
    -(k-1),\quad \text{if }2\leq k\leq m\\
    -(2m-k),\quad \text{if }m < k\leq 2m{-}1
    \end{cases},
\end{align*}
for $m\geq 2$. So,
\begin{align*}
      \mathfrak{f}_{2m}(u)&=\sum_{k=2}^{m}(k-1)u^k+\sum_{k=m+1}^{2m-2}(2m-k-1)u^k=\frac{u^2-2u^{m+1}+u^{2m}}{(1-u)^2},\\
      \mathfrak{f}_{2m+1}(u)&=\sum_{k=2}^m-(k-1)u^k+\sum_{k=m+1}^{2m-1}-(2m-k)u^k=\frac{u^{m+2}+u^{m+1}-u^{2}-u^{2m+1}}{(1-u)^2},\\
      \mathfrak{f}(x,u)&=\sum_{m\geq 2}\mathfrak{f}_{2m}(u)\frac{x^{2m}}{(2m)!}+\sum_{m\geq 2}\mathfrak{f}_{2m+1}(u)\frac{x^{2m+1}}{(2m+1)!}\\
    &= \frac{1}{(1-u)^2}\left(u^2e^{-x}+e^{-ux}-2u\cosh{\sqrt{u}x}+\frac{u+u^2}{\sqrt{u}}\sinh{\sqrt{u}x}-(1-u)^2\right). 
    \end{align*}
  \end{proof}
 




\section*{Methodological remarks}
In this paper, most of our results are obtained in a way of splitting the permutations into two subwords. However, this method is not always applicable. One example is the number of PADs avoiding the pattern $p=1\,2$. The only derangement that avoid $p$ is $(1\,\,n\,)(2\,\,\,n{-}1\,)\cdots(\frac{n}{2}\,\,\,\frac{n+2}{2})$, for even $n$, that is, the derangement over $[n]$ with entries in decreasing order when written in linear representation. However, it does not exist if $n$ is odd, since $\frac{n+1}{2}$ is a fixed point. The PAD $\delta$ created from a pair $(\delta_1, \delta_2)$, by the mapping $\Phi_n^{-1}$, of two even length derangements that both avoids the pattern $p$ is $\delta=(1\,\,n{-}1\,)(3\,\,\,n{-}3\,)\cdots\big(\frac{n-2}{2}\,\,\,\frac{n+2}{2}\big)(2\,\,n\,)(4\,\,\,n{-}2\,)\cdots\big(\frac{n}{2}\,\,\,\frac{n+4}{2}\big)$, which is $n{-}1\,\,n\,\,n{-}3\,\,n{-}2\cdots 3\,\,4\,\,1\,\,2$ in linear form, has length $n\equiv 0 \pmod{4}$. However, each pair $i\,\,i{+}1$, where $i$ is an entry in odd position, is a subword with the occurrence of the pattern $p$ in $\delta$. This indicates that $\delta_1$ and $\delta_2$ avoid $p$ but $\delta$ doesn't. Things get even more complicated with patterns of length greater than 2.

\textbf{Final Remarks:}
As for now, we have not been successful in
finding the recurrence relations and generating functions for the sequences  $\{\mathfrak{d}_{n,k}\}_{n=0}^\infty$, $\{\mathfrak{d}_{n,k}^e\}_{n=0}^\infty$, and $\{\mathfrak{d}_{n,k}^o\}_{n=0}^\infty$. 

\section*{Acknowledgements}
The first author acknowledges the financial support extended by the cooperation agreement between International Science Program at Uppsala University and Addis Ababa University. Special thanks go to Prof.\ J\"orgen Backelin, Prof.\ Paul Vaderlind and Dr.\ Per Alexandersson of Stockholm University - Dept. of Mathematics, for all their valuable inputs and suggestions.
Many thanks to our colleagues from CoRS - Combinatorial Research Studio, for lively discussions and comments.

\bibliographystyle{plain}
\bibliography{main}
\end{document}